\newtheorem{theorem}{Theorem}
\newtheorem{corollary}[theorem]{Corollary}
\newtheorem{definition}[theorem]{Definition}
\newtheorem{fact}[theorem]{Fact}
\newtheorem{lemma}[theorem]{Lemma}
\newtheorem{proposition}[theorem]{Proposition}
\newenvironment{proof}[1][Proof]{\textbf{#1.} }{\ \rule{0.5em}{0.5em}}
\title{A generalized closure concept based on neighborhood-equivalence and preserving graph Hamiltonicity}
\author{Thierry Vall\'ee\\[6pt] {\small 7 All\'ee Georges Rouault, 75020 Paris}\\ {\small Phone: +33 (0) 7 87 16 68 88}\\ \small{Email: vallee\_th@yahoo.fr; vallee@pps.univ-paris-diderot.fr}} 
\date{}
\begin{document}
\maketitle
%\tableofcontents

\begin{abstract}
A graph is Hamiltonian if it contains a cycle which goes through all vertices exactly once. Determining if a graph is Hamiltonian is known as
a NP-complete problem and no satisfactory characterization for these graphs has been found. 

In 1976 Bondy and Chv\`atal introduced a way to get round the Hamiltonicity problem complexity by using a closure of the graph. This closure is a supergraph of G which preserves Hamiltonicity, that is, which is Hamiltonian if and only if G is. Since this seminal work, several closure concepts preserving Hamiltonicity were introduced. In particular Ryj\`acek defined in 1997 a closure concept for claw-free graphs based on local completion. The completion is performed for every eligible vertex of the graph.

Extending these works, Bretto and Vall\'ee recently introduced a new closure concept preserving Hamiltonicity and based on local completion. The local completion is performed for each neighborhood-equivalence eligible vertex of the graph. 

In this article, we generalize the main results of Bretto and Vall\'ee by introducing a broader notion of neighborhood-equivalence eligibility, allowing the definition of a denser graph closure which still preserves Hamiltonicity.
\end{abstract}

\noindent {\bf Keywords:} graph closure, Hamilton cycles, Hamiltonicity problem, local completion.

\section{Introduction}

A graph is Hamiltonian if it contains a cycle which goes through all vertices exactly once. Determining if a graph is Hamiltonian is known as
a NP-complete problem and no satisfactory characterization for these graphs has been found. A huge body of literature exists on the subject surveyed for instance in \cite{Gould1991,Gould2003}.

In \cite{BondyChvatal}, Bondy and Chv\`atal introduced a way to get round the Hamiltonicity problem complexity by using a closure of the graph. The closure is then proved to be Hamiltonian if and only if the graph is. In particular, if the closure is a complete graph then the graph is Hamiltonian. Since this seminal article, several closure concepts preserving Hamiltonicity were introduced (for a survey on the topic, see for instance \cite{BroersmaRyjacekSchiermeyer}). In particular Z. Ryj\`acek defined in \cite{Ryjacek1} a closure concept for claw-free graphs based on local completion. The local completion is repeatedly performed on every \emph{eligible} vertex, as long as such a vertex exists.

Following a different approach, Goodman and Hedetniemi gave in \cite{GooHed1974} a sufficient condition for Hamiltonicity based on the existence of a clique-covering of the graph. This condition was recently generalized in \cite{BreVal1,BreVal2} using the notion of \emph{Eulerian} clique-covering. It was also shown in \cite{Vallee1}, that there exists an Eulerian clique-covering of a graph if and only if there exists a \emph{normal} one, where a clique-covering is normal if it contains the closed neighborhood of every simplicial vertex of the graph. In this context, closure concepts based on local completion are interesting since, then, the closure of a graph contains more simplicial vertices than the graph itself, making the search for a normal clique-covering easier. For instance, a closure in the sense of \cite{Ryjacek1} has at most one \emph{normal} Eulerian clique-covering.

In \cite{BreVal3} a new closure concept based on local completion and preserving Hamiltonicity for all graphs is studied. The closure is defined using the notion of \emph{neighborhood-equivalence} as first introduced in \cite{BreVal1}, and is obtained by performing a local completion at all \emph{neighborhood-equivalence eligible (N-eligible)} vertices of the graph.\smallskip 

In the sequel, we generalize N-eligibility using the notion \emph{2-weighted N-eligibility (N2-eligibility)} (cf. Definition \ref{DfN2-Eligible}). We show how to obtain the \emph{N2-closure} of a graph by performing recursively a local completion at a chosen \emph{N2-eligible} vertex. The N2-closure is then proved to be Hamiltonian if and only if the graph is. In particular, the N2-closure is a supergraph of the N-closure whenever the N-eligible vertices are chosen first during the completion process.

In a first section, we introduce some notations and remind the reader of the definitions of local completion and neighborhood-equivalence. In a second section, we introduce \emph{alternating} paths and present some of their properties. Finally, in Section \ref{SecMain}, these paths are used to show  that, for all graph $G$ and choice function $\rho$ on the set of vertices of $G$, the N2-closure of $G$ is a N2-eligible free graph which circumference is equal to the circumference of $G$. We conclude by giving an example (Figure \ref{CaptureN2EligibleFig}) which shows that there are in general more than one N2-closure for a given graph.  

\section{Preliminaries}

\subsection{General Notations}

In the sequel, $|X|$ denotes the cardinal of the set $X$, and $X \setminus Y =\{ x \in X : x \notin Y\}$. We also define $\mathbb{P}(X) = \{\{x,y\} \subseteq X : x \neq y\}$ and $[X, Y] = \{\{x,y\} \in \mathbb{P}(X \cup Y):  x \in X, \, y \in Y\}$. Notice that $[X, Y]= [Y, X]$ and that $X \subseteq X'$ implies $[X, Y] \subseteq [X', Y]$ and $[Y, X] \subseteq [Y, X']$.

We always suppose a graph to be undirected, simple and finite. Thus a graph $G$ is a pair $(V,E)$ where $V$ is the vertex set of $G$, and $E$ is a subset of $\mathbb{P}(V)$. To simplify notations a pair $\{x,y\} \in \mathbb{P}(V)$ is simply written $xy$. If $X \subseteq V$, $E(X) = \{ xy \in E : x,y \in X\}$.

The \emph{(open) neighborhood of $x \in V$} is the set $N(x) = \{y: xy \in E\}$. Its \emph{closed neighborhood} is the set $N[x] = N(x) \cup \{x\}$. The \emph{closed neighborhood of $X \subseteq V$} is the set $N[X] = \cup_{x \in X} N[x]$. Its \emph{(open) neighborhood} is the set $N(X) = N[X] \setminus X$.\smallskip

A walk in $G$ is a sequence of vertices $w=x_0 \dotsc x_k$ such that $x_ix_{i+1} \in E$, for every $i \in \{0, \dotsc , k\! - \!1\}$. The integer $k$ is the length of $w$, $x_0,x_k$ are its endpoints, $x_0$ its starting point and $x_k$ its ending point. In particular $x$ is a walk of length $0$ with starting and ending point $x$. A walk is \emph{closed} if $k \geq 3$ and $x_0 =x_k$. A closed walk is a \emph{cycle} if it contains no repetition of vertex except for the endpoint. We denote by $c(G)$ the circumference of $G$, that is, the length of the longest cycle in $G$.  A cycle is \emph{Hamilton} if it contains every vertex of the graph. A graph is \emph{Hamiltonian} if it contains a Hamilton cycle. 

If $w= x_0 \dotsc x_k$ is a walk then $\mathbb{V}(w) = \{x_0, \dotsc , x_{k}\}$ and $\mathbb{E}(w) = \{x_{i}x_{i+1} : i \in \{0, \dotsc , k\! - \!1\}\,\}$. Notice that there are infinitely many graphs in which a given sequence $w$ is a walk and that $\mathbb{V}(w) \subseteq V$ and $\mathbb{E}(w)\subseteq E$, for every such graph. 

If $w= x_0 \dotsc x_k$ is a walk then $x_i\stackrel{\rightarrow}{w} x_{j}$, where $0 \leq i \leq j \leq k$, denotes the subwalk of $w$ with endpoints $x_i,x_j$, and $x_j\stackrel{\leftarrow}{w} x_{i}$ the reverse walk $x_{j} x_{j-1} \dotsc  x_i$. In particular, if $i =j$ then $x_i\stackrel{\rightarrow}{w} x_{j} = x_i$ and we let $\stackrel{\leftarrow}{w} = x_k  \dotsc x_0$. Notice that $w = x_0\stackrel{\rightarrow}{w} x_k$. For every $i \in \{0, \dotsc , k\!- \!1\}$, $x_i^+$ is the successor of $x_i$ in $w$, that is, $x_i^+ = x_{i+1}$. For every $i \in \{ 1, \dotsc , k\}$, $x_i^-$ is the predecessor of $x_i$ in $w$, that is, $x_i^- = x_{i-1}$. Finally, if $w' = y_0 \dotsc y_n$ is a walk then $ww'$ denotes the sequence $x_0 \dotsc x_k y_0 \dotsc y_n$. Clearly $ww'$ is a walk if and only if $x_ky_0 \in E$.

If $C = x_0 \dotsc x_kx_0$ is a cycle then, for every $i \in \{1,\dotsc ,k\}$, the walk $x_i\stackrel{\rightarrow}{C}x_kx_0\stackrel{\rightarrow}{C}x_i$ is also a cycle. Such a cycle is said to be a \emph{rotation} of $C$. Clearly each rotation of $C$ has the same vertices than $C$ and so is of maximal length if and only if $C$ is. The same remark applies to $\stackrel{\leftarrow}{C}$.\smallskip

A \emph{path} is a walk containing no repetition of vertex. For every path $P$, we define the \emph{neighborhood of $x$ in $P$} as the set $P(x)$ which contains, when defined, the predecessor and the successor of $x$. That is, if $P =x$ then $P(x) = \emptyset$; and if $k\neq 0$ and $P = x_0 \dotsc x_k$ then $P(x_0) = \{x^+_0\}$, $P(x_k) = \{x^-_k\}$ and $P(x_i) = \{x^-_i, x^+_i\}$, for every $i \in \{1, \dotsc , k\! - \!1\}$.  Notice that clearly $P(x) \subseteq N(x) \cap \mathbb{V}(P)$ but, since $P(x)$ contains only the immediate predecessor and immediate successor of $x$ in $P$, we may have $P(x) \neq N(x) \cap \mathbb{V}(P)$. For every $X \subseteq \mathbb{V}(P)$, we let $P(X) = \cup_{x\in X} P(x)$. Notice that, contrarily to $N(X) \cap X$, $P(X) \cap X$ may not be empty.\smallskip

A set $X \subseteq V$ is a \emph{clique} of $G$ if $xy \in E$, for all distinct $x,y \in X$. A vertex is \emph{simplicial} if $N[x]$ is a clique of $G$. We denote by $S$ the set of simplicial vertices of $G$ and by $N \! S$ the set of non-simplicial vertices of $G$.\smallskip

For every $X\subseteq V$, the \emph{size $\sigma (X)$} of $X$ in $G$ is $|E(X)|$. Its \emph{size $\sigma_P(X)$ in $P$}, where $P$ is a path, is $| \mathbb{E}(P) \cap [X, X]|$.\smallskip

A graph $G$ is connected if all distinct vertices $x,y$ are connected by a walk, and complete if $E = \mathbb{P}(V)$. From now on, $G$ is always supposed to be connected.

\subsection{Local completion, neighborhood equivalence}

We define \emph{local completion} below using the notion of \emph{neighborhood-equivalence} as defined in \cite{BreVal1}. We also give some easy results.

\begin{definition} Two vertices $x,y$ of $G$ are \emph{neighborhood-equivalent} if $N[x] = N[y]$. We write $x \equiv y$ to express that $x,y$ are neighborhood equivalent and $\bar x$ is the class of $x$ modulo $\equiv$.
\end{definition}

\begin{fact}\label{FaBasicN-equivalence} For all $x \in V$:
\begin{enumerate}
\item\label{FaBasicN-equivalence.3} $N[\bar x] = N[x]$.
\item\label{FaBasicN-equivalence.1} $\bar x$ is a clique and $[N(\bar x), \bar x] \subseteq E$.
%\item\label{FaBasicN-equivalence.2} If $xy \in E$ then $\bar x \cup \bar y$ is a clique.
%\item\label{FaBasicN-equivalence.4} $x \in S$ if and only if $\bar x \subseteq S$.
%\item\label{FaBasicN-equivalence.5} $x \in N\!S$ if and only if $\bar x \subseteq N\!S$.
%\item\label{FaBasicN-equivalence.5} If $x \in NS$ then $N(\bar x) \neq \emptyset$.
\end{enumerate}
\end{fact}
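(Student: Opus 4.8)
The plan is to prove both parts of Fact~\ref{FaBasicN-equivalence} directly from the definition $x \equiv y \iff N[x]=N[y]$, working only with the basic set-theoretic facts about closed neighborhoods established in the Preliminaries. Recall $\bar x$ denotes the equivalence class of $x$, so $\bar x = \{y \in V : N[y]=N[x]\}$, and that $N[\bar x] = \bigcup_{y \in \bar x} N[y]$ by the definition of the closed neighborhood of a set.

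For part~\ref{FaBasicN-equivalence.3}, the claim is $N[\bar x]=N[x]$. First I would observe that since $x \in \bar x$ (reflexivity of $\equiv$, which is immediate from $N[x]=N[x]$), we have $N[x] \subseteq N[\bar x]$. For the reverse inclusion, take any $z \in N[\bar x]$, so $z \in N[y]$ for some $y \in \bar x$; but $y \in \bar x$ means $N[y]=N[x]$ by definition of the class, hence $z \in N[x]$. This gives $N[\bar x] \subseteq N[x]$ and the two inclusions yield equality. This part is essentially a one-line unfolding of definitions and presents no obstacle.

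For part~\ref{FaBasicN-equivalence.1}, there are two assertions. To see $\bar x$ is a clique, take distinct $y,y' \in \bar x$. Then $N[y]=N[y']$, and since $y \in N[y]$ trivially, we get $y \in N[y']=N[y']$, so $y \in N[y']$; as $y \neq y'$ this means $y \in N(y')$, i.e.\ $yy' \in E$. Since this holds for every pair of distinct vertices in $\bar x$, the set $\bar x$ is a clique. For the inclusion $[N(\bar x), \bar x] \subseteq E$, take an edge candidate $\{u,v\} \in [N(\bar x),\bar x]$ with $u \in N(\bar x)$ and $v \in \bar x$. Using part~\ref{FaBasicN-equivalence.3}, $u \in N(\bar x)=N[\bar x]\setminus \bar x = N[x]\setminus \bar x$; in particular $u \in N[x]=N[v]$ (the last equality because $v \in \bar x$ gives $N[v]=N[x]$). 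Since $u \notin \bar x$ forces $u \neq v$, we have $u \in N[v]\setminus\{v\}=N(v)$, so $uv \in E$.

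The only point requiring a little care—the nearest thing to an obstacle—is bookkeeping the distinction between the open and closed neighborhoods when converting membership statements like $u \in N[v]$ into edges $uv \in E$: one must check the relevant vertices are genuinely distinct before concluding adjacency, which is exactly why the clique argument uses $y \neq y'$ and the second inclusion uses $u \notin \bar x$. I would make sure to invoke part~\ref{FaBasicN-equivalence.3} explicitly in the second inclusion so that $N(\bar x)$ is rewritten as $N[x]\setminus \bar x$, keeping the argument self-contained.
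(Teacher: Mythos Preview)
Your argument is correct; the paper actually states Fact~\ref{FaBasicN-equivalence} without proof, treating both items as immediate consequences of the definition of neighborhood-equivalence, so your write-up simply fills in the routine details. (There is a harmless typo where you write ``$y\in N[y']=N[y']$''---presumably ``$y\in N[y]=N[y']$'' is intended---but the intended reasoning is clear and sound.)
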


\begin{definition}\label{DfLocalCompletion} The \emph{local completion of $G$ at $x$} is the graph $G_x =(V,E_x)$, where $E_x = E \cup B_x$ and $B_x = \{ y z: y z \notin E, \, y,z \in N(\bar x) \}$.
\end{definition}

\noindent Obviously $E \cap B_x = \emptyset$. Moreover, if $y,z \in N[x]$ and $yz \notin E$ then $y,z \in N(\bar x)$ by Fact \ref{FaBasicN-equivalence}.\ref{FaBasicN-equivalence.1}. Hence, it is easy to see that $B_x = \{ y z: y z \notin E, \, y,z \in N[x]\}$ and so that $N[x]$ is complete in $G_x$. It is also clear that $G_x$ is connected if $G$ is.\smallskip

In the sequel, we denote respectively by $N_x[z]$ (resp. ${\bar z}^x$) the neighborhood (resp. the neighborhood-equivalence class of $z$) in $G_x$ and by $S_x$ (resp. $N\!S_x$) the set of simplicial (resp. non-simplicial) vertices of $G_x$.

\begin{fact}\label{FaBasicLocalCompletion} For every $x\in V$, $G$ is a spanning subgraph of $G_x$ and:
\begin{enumerate}
%\item\label{FaBasicLocalCompletion.1} $G$ is a spanning subgraph of $G_x$.
\item\label{FaBasicLocalCompletion.1.5} For every $x' \in \bar x$, $N[x'] = N_x[x']$.
\item\label{FaBasicLocalCompletion.1.6} $\bar x \subseteq S_x \supseteq S$.
%\item\label{FaBasicLocalCompletion.1.7} $N[x] = N[\bar x] = N_x[\bar x] = N_x[x]$.
%\item\label{FaBasicLocalCompletion.2} ${\bar x}^x \subseteq S_x$.
\end{enumerate}
\end{fact}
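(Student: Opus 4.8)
The plan is to prove all three assertions by analyzing precisely which edges the local completion adds, namely the edges of $B_x$, each of which joins two vertices of $N(\bar x)$.

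That $G$ is a spanning subgraph of $G_x$ is immediate: $G_x = (V, E_x)$ shares the vertex set $V$ with $G$ and $E \subseteq E \cup B_x = E_x$. In particular $N[z] \subseteq N_x[z]$ for every vertex $z$, and every clique of $G$ remains a clique of $G_x$. For assertion \ref{FaBasicLocalCompletion.1.5}, let $x' \in \bar x$. Since $N(\bar x) = N[\bar x] \setminus \bar x$ is disjoint from $\bar x$, we have $x' \notin N(\bar x)$, so no edge of $B_x$ is incident with $x'$; that is, $N_x(x') = N(x')$, whence $N_x[x'] = N[x']$ together with the spanning-subgraph inclusion.

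For assertion \ref{FaBasicLocalCompletion.1.6}, first take $x' \in \bar x$. By the previous paragraph and Fact \ref{FaBasicN-equivalence}.\ref{FaBasicN-equivalence.3} we get $N_x[x'] = N[x'] = N[x]$, and since $N[x]$ is complete in $G_x$ this shows $x'$ is simplicial in $G_x$, i.e. $\bar x \subseteq S_x$. Now take $v \in S$; I must show $N_x[v]$ is a clique of $G_x$. If $v \notin N(\bar x)$ then, as above, $v$ receives no new edge, so $N_x[v] = N[v]$, which is a clique of $G$ and hence of $G_x$. The remaining, and main, case is $v \in N(\bar x)$. Here the crucial observation is that $N[v] \subseteq N[x]$: indeed, by Fact \ref{FaBasicN-equivalence}.\ref{FaBasicN-equivalence.1} the vertex $v$ is adjacent to every member of $\bar x$, and fixing one such $x'' \in \bar x$, simpliciality of $v$ forces every $w \in N[v]$ to lie in $N[x''] = N[x]$. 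Since moreover any new neighbor of $v$ lies in $N(\bar x) \subseteq N[x]$, we obtain $N_x[v] \subseteq N[x]$, and completeness of $N[x]$ in $G_x$ yields that $N_x[v]$ is a clique. Hence $v \in S_x$, and $S \subseteq S_x$.

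I expect the only real subtlety to be the last case $v \in N(\bar x)$ of assertion \ref{FaBasicLocalCompletion.1.6}, where $v$ may genuinely acquire new neighbors; the containment $N[v] \subseteq N[x]$ is what tames this, reducing everything to the already-established completeness of $N[x]$ in $G_x$. All other steps are direct consequences of the description $B_x = \{yz : yz \notin E,\ y,z \in N[x]\}$ and the spanning-subgraph relation.
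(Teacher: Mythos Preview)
Your proof is correct and follows the same outline as the paper's for the spanning-subgraph claim, for part~\ref{FaBasicLocalCompletion.1.5}, and for the inclusion $\bar x \subseteq S_x$. The difference lies in how you handle the main case $v \in S \cap N(\bar x)$ of the inclusion $S \subseteq S_x$. The paper argues pairwise: given $u,w \in N_x[v]$, it runs a case analysis (is $uw \in E$ already? if not, which of $u,w$ lies outside $N[v]$? does the other lie in $N(\bar x)$?) and in each branch lands the pair inside $N[x]$ or inside $B_x$. You instead make the single containment $N[v] \subseteq N[x]$ explicit up front---using that $v$ is simplicial and adjacent to some $x'' \in \bar x$---and then observe that the new neighbours of $v$ also lie in $N(\bar x) \subseteq N[x]$, so $N_x[v] \subseteq N[x]$ and completeness of $N[x]$ in $G_x$ finishes at once. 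Your route is shorter and avoids the case split; the paper's argument is really a disguised version of the same containment, rediscovered piecemeal.
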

\begin{proof} Clearly $G$ is a spanning subgraph of $G_x$ and so $N[y] \subseteq N_x[y]$ for every $y \in V$. Moreover, clearly, by definition of $B_x$, if $N[y] \neq N_x[y]$ then $y \in N(\bar x)$ and so $y \notin \bar x$. That proves the first point which in turn implies easily $\bar x \subseteq S_x$, since $N_x[x] = N[x]$ is a clique in $G_x$. Notice now that $N(\bar x) = N[x] \setminus \bar x$ by Fact \ref{FaBasicN-equivalence}.\ref{FaBasicN-equivalence.3}, and so $N(\bar x) \subseteq N(x)$.

It remains to show that $S \subseteq S_x$. So let $y \in S$. If $N[y] = N_x[y]$ then $y \in S_x$, since $N[y]$ is a clique in $G$ and so in $G_x$. If now $N[y] \neq N_x[y]$, we have $y \in N(\bar x) \subseteq N(x)$ and so $x \in N(y)$. Let now $u,v \in N_x[y]$, we must show $uv \in E_x$. If $uv \in E$ then the result is immediate, since $E \subseteq E_x$, so we can suppose $uv \notin E$. Hence, by simpliciality of $y$, at least one vertex among $u,v$ is not in $N[y]$. Without loss of generality, we can suppose $u$ this vertex. We have $y \in N_x[u] \setminus N[u]$, and so $N_x[u] \neq N[u]$ and $u \in N(\bar x) \subseteq N(x)$. If now $v \in N(\bar x)$ then $uv \in B_x$, by definition of $B_x$. Finally, if $v \notin N(\bar x)$ then $N_x[v] = N[v]$ and, since $v \in N_x[y]$, we get $y \in N[v]$ and so $v \in N[y]$. Hence, since $x \in N(y)$, we have $xv \in E$ by simpliciality of $Y$. So $v \in N[x]$ and, since $u\in N[x]$, we conclude $uv \in E_x$ from the fact that $N[x]$ is clique in $G_x$. Thus $N_x[y]$ is a clique in $G_x$ and so $y \in S_x$. That proves $S \subseteq S_x$ and so the second point.
\end{proof}

%\noindent We conclude this preliminary section with an easy result used in Section \ref{SecMainResults}.

\begin{lemma}\label{LmClassXonC} Let $X \subseteq V$ and $y,z \in V$ such that $[\{y,z\}, \,X] \subseteq E$. If $C$ is a cycle of maximal length such that $yz\in \mathbb{E}(C)$ then $\mathbb{V}(C) \cap X = X$.
\end{lemma}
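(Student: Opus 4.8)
The plan is to prove the equivalent assertion $X \subseteq \mathbb{V}(C)$ (equality $\mathbb{V}(C) \cap X = X$ is the same as this inclusion, since $\mathbb{V}(C) \cap X \subseteq X$ always holds) by contradiction, using a standard vertex-insertion argument that exploits that $C$ has maximal length among all cycles of $G$. First I would suppose, for contradiction, that some vertex $w \in X$ lies off the cycle, i.e.\ $w \in X \setminus \mathbb{V}(C)$. Since $yz \in \mathbb{E}(C)$ we have $y,z \in \mathbb{V}(C)$, so in particular $w \notin \{y,z\}$; the hypothesis $[\{y,z\}, X] \subseteq E$ then gives $yw \in E$ and $zw \in E$. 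Thus $w$ is an off-cycle vertex adjacent to both endpoints of one of the edges of $C$, which is exactly the configuration that permits an insertion.

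Next I would carry out the insertion. Using the rotation remark (rotations and the reverse of $C$ have the same vertices and the same length), I may assume without loss of generality that $C$ is written with the edge $yz$ at its start, say $C = y\,z\,c_1 \cdots c_m\,y$. Replacing the edge $yz$ by the path $y\,w\,z$ produces the sequence $C' = y\,w\,z\,c_1 \cdots c_m\,y$. Because $yw \in E$ and $wz \in E$, and every other consecutive pair in $C'$ is already a consecutive pair of $C$, the sequence $C'$ is a closed walk; and because $w \notin \mathbb{V}(C)$ while $y,z,c_1,\dots,c_m$ are pairwise distinct, $C'$ has no repeated vertex apart from its endpoint, so $C'$ is a genuine cycle. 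It contains exactly one vertex more than $C$, hence its length is strictly greater than that of $C$, contradicting the maximality of the length of $C$. Therefore no such $w$ exists, so $X \subseteq \mathbb{V}(C)$, which is what we wanted.

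The argument is short, and I do not expect a serious obstacle; the two points deserving care are purely bookkeeping. One is verifying that $C'$ is a true cycle rather than merely a closed walk, which rests entirely on $w$ being off $C$ so that all vertices of $C'$ are distinct. The other, more conceptual, point is the reading of \emph{``maximal length''}: the comparison cycle $C'$ need \emph{not} contain the edge $yz$, so the maximality used here must be maximality of length over all cycles of $G$, and not merely over those cycles passing through $yz$ (restricting to cycles through $yz$ would in fact make the statement false). Once this is fixed, the crux is simply the observation that a single off-cycle vertex adjacent to both ends of an edge of a longest cycle can always be inserted to lengthen it.
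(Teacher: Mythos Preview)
Your argument is correct and is essentially the paper's own proof: assume some $w\in X\setminus\mathbb{V}(C)$, rotate so that the edge $yz$ sits at an end of $C$, and insert $w$ between $y$ and $z$ to obtain a strictly longer cycle, contradicting maximality. Your side remark that ``maximal length'' must mean maximal among \emph{all} cycles of $G$ (not merely those containing $yz$) is exactly the intended reading and matches how the lemma is applied later in the paper.
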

\begin{proof} Obviously $\mathbb{V}(C) \cap X \subseteq X$. Up to a rotation we can suppose that $y$ is the starting point of $C$, and so either $C =yz \stackrel{\rightarrow}{C} y$ or $C =y\stackrel{\rightarrow}{C} zy$. We can suppose the second case (otherwise the proof is done for $\stackrel{\leftarrow}{C}$). Moreover, if we suppose $x \in X \setminus \mathbb{V}(C)$ then, since $yx,zx \in [\{y,z\}, \, X] \subseteq E$, $y  \stackrel{\rightarrow}{C} zxy$ is a cycle in $G$ strictly longer than $C$, contradicting the maximality of $C$.
\end{proof}

\section{Alternating paths}

In this section, we assume a graph $G$ with set of vertices $V$ and set of edges $E$. 
%\subsection{Definition and basic properties}

\begin{definition}\label{DfAlternatingPath} Let $P$ be a path in $G$ and $X,Y$ be disjoint subsets of $\mathbb{V}(P)$. If the endpoints of $P$ are in $Y$ then:
\begin{enumerate}
\item $P$ is \emph{$Y \! X$-pseudo-alternating} if moreover $P(X) \subseteq X \cup Y$. 
\item $P$ is \emph{$Y \! X$-semi-alternating} if moreover $P(X) \subseteq Y$. 
\item $P$ is \emph{$Y \! X$-alternating} if moreover $P(X) \subseteq Y$ and $P(Y) \subseteq X$.
\end{enumerate}
\noindent A $Y\!X$-pseudo-alternating path is \emph{proper} if it is not $Y \! X$-semi-alternating. 

\noindent A $Y\!X$-semi-alternating path is \emph{proper} if it is not $Y \! X$-alternating.
\end{definition}

\noindent  Notice that a $Y \! X$-alternating path is a particular case of $Y \! X$-semi-alternating path which in turn is a particular case of $Y \! X$-pseudo-alternating path. Moreover, the $Y \! X$-pseudo-alternating path $P$ is proper if and only if $P(X) \cap X \neq \emptyset$. Finally, notice that $P=y$ is the unique $\{y\} \emptyset$-alternating path.

\begin{fact}\label{FaAlternatingPath} If $P$ is a path and $X,Y$ are disjoint subsets of $\mathbb{V}(P)$ then $P$ is $Y \! X$-alternating if and only if $P$ satisfies the following conditions:
\begin{enumerate} 
\item\label{FaAlternatingPath.1} $|X| = |Y|-1$
\item\label{FaAlternatingPath.2} There exist two enumerations $y_0, \dotsc ,y_n$ and $x_0, \dotsc , x_{n-1}$ of $Y$ and $X$ such that $P = y_0 x_0 \dotsc x_{n-1} y_n$.
\item\label{FaAlternatingPath.3} $\mathbb{V}(P) = X \cup Y$, $P(X) = Y$ and $P(Y) = X$.
\end{enumerate}
\end{fact}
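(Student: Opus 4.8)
The plan is to prove the two implications of the biconditional separately, with the substance lying entirely in the forward direction. For the easy converse, suppose $P$ admits an enumeration $P = y_0 x_0 \dotsc x_{n-1} y_n$ as in condition 2, together with the data of conditions 1 and 3. Then reading off the $P$-neighborhoods is immediate: each $x_i$ has $P(x_i) = \{y_i, y_{i+1}\} \subseteq Y$, each $y_j$ has $P(y_j) \subseteq \{x_{j-1}, x_j\} \subseteq X$, and the endpoints $y_0, y_n$ lie in $Y$. Hence $P(X) \subseteq Y$ and $P(Y) \subseteq X$ with both endpoints in $Y$, which is exactly $Y\!X$-alternation. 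This step is routine bookkeeping.

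For the forward direction, assume $P = z_0 \dotsc z_m$ is $Y\!X$-alternating, so that $z_0, z_m \in Y$, $P(X) \subseteq Y$ and $P(Y) \subseteq X$. The heart of the argument is a strict alternation claim, proved by induction on the index $i$: $z_i \in Y$ when $i$ is even and $z_i \in X$ when $i$ is odd. The inductive step uses the two containments directly, namely if $z_i \in Y$ then its successor lies in $P(z_i) \subseteq P(Y) \subseteq X$, and if $z_i \in X$ then its successor lies in $P(X) \subseteq Y$; the base case $z_0 \in Y$ is the endpoint hypothesis. This single induction delivers almost everything at once: since every $z_i$ is thereby placed in $X$ or in $Y$, we obtain $\mathbb{V}(P) = X \cup Y$; since $z_m \in Y$, the length $m$ is even, say $m = 2n$; and setting $y_j := z_{2j}$ and $x_j := z_{2j+1}$ produces the enumerations of condition 2 (these are genuine enumerations without repetition because $P$ is a path and the disjointness $X \cap Y = \emptyset$ keeps the even- and odd-indexed vertices in separate classes). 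Counting indices then gives $|Y| = n+1$ and $|X| = n$, which is condition 1. Finally, reading the $P$-neighborhoods off this enumeration yields the reverse inclusions $Y \subseteq P(X)$ and $X \subseteq P(Y)$, upgrading the hypothesised containments to the equalities $P(X) = Y$ and $P(Y) = X$ of condition 3.

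The main obstacle I anticipate is justifying $\mathbb{V}(P) = X \cup Y$: a priori $X$ and $Y$ are arbitrary disjoint subsets of $\mathbb{V}(P)$ and need not cover it, so one cannot presume there are no ``foreign'' vertices outside $X \cup Y$. The alternation induction is precisely the mechanism that excludes them, by forcing each successor into $X$ or $Y$. Care is also needed at the degenerate base case $P = y$ of length $0$, where $X = \emptyset$, $|Y| = 1$ and condition 1 reads $0 = 1 - 1$; this is exactly the unique $\{y\}\emptyset$-alternating path noted after Definition \ref{DfAlternatingPath}, and it should be disposed of separately so that the induction is applied only to paths of positive length.
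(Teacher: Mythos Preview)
Your proposal is correct and follows essentially the same route as the paper's own proof: the paper sketches the forward direction by saying one builds the enumerations $y_0, x_0, y_1, x_1, \dotsc$ step by step starting from the endpoint $y_0 \in Y$ and repeatedly applying $P(Y) \subseteq X$ and $P(X) \subseteq Y$, which is exactly your parity induction on the index $i$. Your write-up is simply a more detailed version of that sketch, including the observation that $\mathbb{V}(P) = X \cup Y$ falls out because every $z_i$ is forced into one of the two sets.
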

\begin{proof} If is easy to check that if $P$ satisfies the conditions above then $P$ is $Y \! X$-alternating. Now if $P$ is $Y \! X$-alternating then, using the fact that the endpoints of $P$ are in $Y$, that $P(Y) \subseteq X$ and that $P(X) \subseteq Y$, it is easy to build the enumerations $y_0 \dotsc y_n$ and $x_0 \dotsc x_{n-1}$, where $y_0$ is the starting point of $P$, $x_0$ its successor, and so on, until we reach the ending point $y_n$ of $P$. It is then easy to check that $P$ satisfies the other conditions.
\end{proof}

\noindent We remind the reader that $\sigma_P(Y)$ is the size of $Y$ in $P$, that is, $\sigma_P(Y) =|\mathbb{E}(P) \cap [Y,Y]|$.

\begin{lemma}\label{LmGeneralYX} If $P$ is $Y \! X$-semi-alternating then:
\begin{enumerate}
\item\label{LmGeneralYX.1} $|X| < |Y| - \sigma_P(Y)$.
\item\label{LmGeneralYX.2} If \ $\mathbb{V}(P) \setminus (X \cup Y) \neq  \emptyset$ then $|X| < |Y|-1$.
\end{enumerate}
\end{lemma}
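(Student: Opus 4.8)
The plan is to fix a representation $P = v_0 \dotsc v_k$ with $v_0, v_k \in Y$, and to set $Z = \mathbb{V}(P) \setminus (X \cup Y)$, so that $\mathbb{V}(P)$ is partitioned into $X$, $Y$, $Z$. The first observation I would record is that, since the endpoints lie in $Y$ and $X \cap Y = \emptyset$, every $x \in X$ is an internal vertex of $P$, hence has exactly two $P$-neighbors, and both of these lie in $Y$ because $P(X) \subseteq Y$. Consequently no edge of $P$ joins two vertices of $X$, and no edge of $P$ joins a vertex of $X$ to a vertex of $Z$; moreover each $x\in X$ contributes exactly two distinct edges of $\mathbb{E}(P) \cap [X, Y]$, so that $|\mathbb{E}(P) \cap [X,Y]| = 2|X|$.

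Next I would run a double count of the $P$-degrees of the vertices of $Y$. On one hand, $v_0$ and $v_k$ have degree $1$ and every other vertex of $Y$ has degree $2$, so the degree sum equals $2|Y| - 2$ (the case $|Y| = 1$, which forces $P = v_0$ and $X = \emptyset$, being trivial). On the other hand, grouping the edges incident to $Y$ by the type of their other endpoint and using the previous paragraph, this same sum equals $2\sigma_P(Y) + 2|X| + c$, where $c := |\mathbb{E}(P) \cap [Y, Z]|$ counts the $Y$--$Z$ edges. Equating the two expressions yields the key identity
\[
|X| = |Y| - 1 - \sigma_P(Y) - \tfrac{c}{2}.
\]

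From this identity both claims fall out. Since $\sigma_P(Y) \geq 0$ and $c \geq 0$, the identity immediately gives $|X| \leq |Y| - 1 - \sigma_P(Y) < |Y| - \sigma_P(Y)$, which is statement \ref{LmGeneralYX.1}. For statement \ref{LmGeneralYX.2}, I would argue that $Z \neq \emptyset$ forces $c \geq 2$: taking a maximal run of consecutive $Z$-vertices in $P$ (all internal, since the endpoints lie in $Y$), its two bordering vertices are neither in $Z$ (by maximality) nor in $X$ (no $X$--$Z$ edge exists), hence both lie in $Y$, producing two $Y$--$Z$ edges. Plugging $c \geq 2$ into the identity gives $|X| \leq |Y| - 2 < |Y| - 1$.

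The only real care needed is in the first paragraph: the edge-type bookkeeping (ruling out $X$--$X$ and $X$--$Z$ edges and getting the count $2|X|$ exactly) is what makes the degree double-count clean, and it rests squarely on the semi-alternating hypothesis $P(X)\subseteq Y$ together with the disjointness of $X$ and $Y$. The degenerate short-path cases ($|Y|=1$, or $Z = \emptyset$ in statement \ref{LmGeneralYX.1}) should be checked but are immediate. I would expect no genuine obstacle beyond being careful that the two $Y$--$Z$ edges bordering a $Z$-run are distinct, which holds because the run, having at least one vertex, has two distinct bordering positions in $P$.
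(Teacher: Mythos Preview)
Your proof is correct and takes a genuinely different route from the paper's. The paper proceeds by induction along the path: writing $P = z_0\dotsc z_k$, it tracks, for each prefix $P_i$, the quantities $|X_i|$, $|Y_i|$, $\sigma_P(Y_i)$ and a flag $b_i$ indicating whether any vertex outside $X\cup Y$ has appeared, and verifies three case-dependent invariants at each step depending on whether $z_i$ lies in $X$, $Y$, or $Z$. This is a somewhat laborious case analysis but has the virtue of being entirely mechanical.

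Your approach replaces that induction by a single global double count of the $P$-degrees of $Y$, exploiting that the semi-alternating hypothesis forces every $x\in X$ to be internal with both $P$-neighbors in $Y$, so that the $[X,Y]$-edges of $P$ number exactly $2|X|$ and there are no $[X,X]$ or $[X,Z]$ edges. The resulting identity $|X| = |Y| - 1 - \sigma_P(Y) - c/2$ is sharper than what the paper records and dispatches both claims at once, the second via the clean observation that a maximal $Z$-run is bordered on both sides by $Y$-vertices. Your argument is shorter, more conceptual, and yields an exact equality rather than an inequality; the paper's induction, by contrast, is closer in spirit to how the later lemmas in the section are argued and perhaps adapts more readily if one wanted finer control on prefixes of $P$.
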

\begin{proof} Let $P = z_0 \dotsc z_k$ be a path verifying the conditions of the lemma. For every $i \in \{0, \dotsc , k\}$, let $P_i = z_0 \dotsc z_i$, $X_i = \mathbb{V}(P_i) \cap X_i$, $Y_i = \mathbb{V}(P_i) \cap Y_i$. Let also $Z_i = \mathbb{V}(P_i) \setminus X_i \cup Y_i$ and define $b_i =0$ if $Z_i = \emptyset$, and $b_i=1$ otherwise. Notice that $X_i$, $Y_i$ and $Z_i$ are pairwise disjoint, since $X$ and $Y$ are disjoint by Definition \ref{DfAlternatingPath}, and that $\mathbb{V}(P_i) = X_i \cup Y_i \cup Z_i$. We let $Z = \mathbb{V}(P) \setminus (X \cup Y)$.

We show first by induction on $i \in \{0 ,\dotsc ,k\}$ that:

\begin{enumerate}
\item If $z_i \in X$ then $|X_i| \leq |Y_i| - \sigma_P (Y_i)$ and $|X_i| \leq |Y_i| - b_i$.
\item If $z_i \in Y$ then $|X_i| < |Y_i| - \sigma_P (Y_i)$ and $|X_i| < |Y_i| - b_i$.
\item If $z_i \in Z$ then $|X_i| < |Y_i| - \sigma_P (Y_i)$ and $|X_i| \leq |Y_i| - b_i$.
\end{enumerate} 

If $i=0$ then the result comes easily from the fact that $z_0 \in Y$ (Definition \ref{DfAlternatingPath}). As induction hypothesis suppose now the result true for $i \in \{0, \dotsc ,k\! - \!1\}$. Notice that the induction hypothesis implies that if $z_i \in Y \cup Z$ then $|X_i| < |Y_i| - \sigma_P (Y_i)$ and if $z_i \in X \cup Z$ then $|X_i| \leq |Y_i| - b_i$. It also implies, in any case, $|X_i| \leq |Y_i| - \sigma_P (Y_i)$ and $|X_i| \leq |Y_i| - b_i$. Let now $z= z_{i+1}$, we have three possibilities:

\begin{enumerate}
\item If $z \in X$ then $X_{i+1} = X_i \cup \{z\}$, $Y_{i+1} = Y_i$, $\sigma_P(Y_{i+1}) = \sigma_P(Y_i)$ and $b_{i+1} = b_i$. Hence, in particular, $|X_{i+1} | = |X_i | +1$, since $P$ does not contain repetition of vertex. Morever, since $P(X) \subseteq Y$ (Definition \ref{DfAlternatingPath}), we have $z_i \in Y_i$ and so $|X_i| < |Y_i| - \sigma_P (Y_i)$ and $|X_i| < |Y_i| - b_i$ by induction hypothesis. Hence, obviously $|X_{i+1}| \leq |Y_{i+1}| - \sigma_P(Y_{i+1})$ and  $|X_{i+1}| \leq |Y_{i+1}| - b_{i+1}$.

\item If $z \in Y$ then $X_{i+1} = X_i$, $Y_{i+1} = Y_i \cup \{z\}$ and $b_{i+1} = b_i$. Hence,  $|Y_{i+1}| = |Y_i| +1$ and, since $|X_i| \leq |Y_i| - b_i$ by induction hypothesis, it comes easily $|X_{i+1}| < |Y_{i+1}| - b_{i+1}$. Now, if $z_i \in X$ then $z_iz \notin E(Y)$, and so $ \sigma_P (Y_{i+1}) = \sigma_P (Y_i)$. Hence, since $|X_i| \leq |Y_i| - \sigma_P (Y_i)$ by induction hypothesis, it comes $|X_{i+1}|   < |Y_{i+1}| - \sigma_P (Y_{i+1})$. If $z_i \in Y \cup Z$, we have $|X_i| < |Y_i| - \sigma_P(Y_i)$  by induction hypothesis. Moreover, we have either $\sigma_P(Y_{i+1}) = \sigma_P(Y_i) +1$ if $z_i \in Y$, or $\sigma_P(Y_{i+1}) = \sigma_P(Y_i)$ if $z_i \in Z$. Hence $\sigma_P(Y_{i+1}) \leq \sigma_P(Y_i) +1$ and so $|Y_i| - \sigma_P (Y_i) \leq |Y_{i+1}| - \sigma_P(Y_{i+1})$. It comes $|X_{i+1}| <  |Y_{i+1}| - \sigma_P (Y_{i+1})$.

\item If $z \in Z$ then we have $X_{i+1} = X_i$, $Y_{i+1} = Y_i$, $\sigma_P(Y_{i+1}) = \sigma_P(Y_i)$ and either $b_{i+1} = b_i +1$, if $Z_i = \emptyset$, or $b_{i+1} =b_i$ otherwise. Moreover, $z_i \notin X_i$, since $P(X) \subseteq Y$, and so $z_i \in Y \cup Z$. Hence we have $|X_i| < |Y_i| - \sigma_P (Y_i)$ by induction hypothesis and so it comes immediatly $|X_{i+1}| < |Y_{i+1}| - \sigma_P (Y_{i+1})$. If now $z_i \in Y$, we have $|X_i| < |Y_i| - b_i$ by induction hypothesis and, since $b_{i+1} \leq b_i +1$, it comes $|X_{i+1}| \leq |Y_{i+1}| - (b_i +1) \leq |Y_{i+1}| - b_{i +1}$. Finally, if $z_i \in Z$, we have $Z_i \neq \emptyset$ and so $b_{i+1} = b_i$. We have also $|X_i| \leq |Y_i| - b_i$ by induction hypothesis and so $|X_{i+1}| \leq |Y_{i+1}| - b_{i+1}$.
\end{enumerate}

\noindent Notice now that $P =P_k$, $X =X_k$, $Y =Y_k$ and $Z=Z_k$. Hence, since $z_k \in Y$ by hypothesis on $P$, we have $|X| < |Y| - \sigma_P (Y)$. Moreover, if $Z \neq \emptyset$ then $b_k =1$ and so $|X| < |Y| - 1$.
\end{proof}

%\begin{corollary}\label{CrGeneralYXBis} If $P$ be a $Y \! X$-semi-alternating path such that $|X| \geq |Y| - \sigma_P(Y)$ then $P(X) \cap X \neq %\emptyset$. 
%\end{corollary}

%\begin{corollary}\label{CrGeneralYXTer} If $P$ be a $Y \! X$-semi-alternating path such that $\mathbb{V}(P) \setminus (X \cup Y) \neq \emptyset$ and $|X| \geq %|Y| -1$ then $P(X) \cap X \neq \emptyset$.
%\end{corollary}

\begin{lemma}\label{LmGeneralYXTer} A $Y \! X$-semi-alternating path $P$ is $Y \! X$-alternating if and only if $|X| \geq |Y|-1$.  
\end{lemma}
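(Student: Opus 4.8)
The plan is to handle the easy forward implication directly from Fact \ref{FaAlternatingPath}, and to extract the converse from the two inequalities of Lemma \ref{LmGeneralYX} used in tandem. For the forward direction: if $P$ is $Y\!X$-alternating, then Fact \ref{FaAlternatingPath}.\ref{FaAlternatingPath.1} gives $|X| = |Y|-1$, hence a fortiori $|X|\geq |Y|-1$.

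For the converse, assume $P$ is $Y\!X$-semi-alternating with $|X|\geq |Y|-1$. First I would combine this hypothesis with the strict inequality $|X| < |Y| - \sigma_P(Y)$ supplied by Lemma \ref{LmGeneralYX}.\ref{LmGeneralYX.1}. Reading $|Y|-1 \leq |X| \leq |Y|-\sigma_P(Y)-1$ off these two bounds squeezes out $\sigma_P(Y)\leq 0$, so $\sigma_P(Y)=0$, and simultaneously $|X| = |Y|-1$. Next I would apply the contrapositive of Lemma \ref{LmGeneralYX}.\ref{LmGeneralYX.2}: since $|X|\geq |Y|-1$ rules out $|X| < |Y|-1$, we must have $\mathbb{V}(P)\setminus(X\cup Y) = \emptyset$; as $X,Y$ are disjoint subsets of $\mathbb{V}(P)$, this says $\mathbb{V}(P)$ is partitioned as $X\cup Y$.

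It then remains only to verify $P(Y)\subseteq X$, since $P(X)\subseteq Y$ already holds by the semi-alternating hypothesis, and these two inclusions together with the endpoints lying in $Y$ are exactly what $Y\!X$-alternating demands. Here I would use $\sigma_P(Y)=0$: fix $y\in Y$ and any $w\in P(y)$; then $yw\in\mathbb{E}(P)$, and since $\mathbb{V}(P)=X\cup Y$ the vertex $w$ lies in $X$ or in $Y$. If $w\in Y$ then $yw\in\mathbb{E}(P)\cap[Y,Y]$, contradicting $\sigma_P(Y)=0$; hence $w\in X$. As $y$ and $w$ were arbitrary, $P(Y)\subseteq X$, so $P$ is $Y\!X$-alternating.

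The main (and modest) obstacle is recognizing that both parts of Lemma \ref{LmGeneralYX} are needed: part \ref{LmGeneralYX.1} pins down the size equality $|X|=|Y|-1$ \emph{together with} $\sigma_P(Y)=0$, while part \ref{LmGeneralYX.2} forces $\mathbb{V}(P)=X\cup Y$. Without $\mathbb{V}(P)=X\cup Y$ one could not guarantee that every $P$-neighbor of a $Y$-vertex even belongs to $X\cup Y$, and without $\sigma_P(Y)=0$ such a neighbor could land in $Y$ rather than $X$; it is precisely the conjunction of these two consequences that yields $P(Y)\subseteq X$.
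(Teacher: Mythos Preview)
Your proof is correct and follows essentially the same approach as the paper's: both use Fact~\ref{FaAlternatingPath} for the forward direction and the two clauses of Lemma~\ref{LmGeneralYX} for the converse. The only cosmetic difference is that you first extract the consequences $\sigma_P(Y)=0$ and $\mathbb{V}(P)=X\cup Y$ and then verify $P(Y)\subseteq X$ directly, whereas the paper argues $P(Y)\subseteq X$ by contradiction and invokes the two parts of Lemma~\ref{LmGeneralYX} case by case; the underlying logic is identical.
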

\begin{proof} From Fact \ref{FaAlternatingPath}, it is obvious that if $P$ is $Y \! X$-alternating then $P$ is $Y \! X$-semi-alternating and such that $|X| \geq |Y|-1$. Suppose now that $P$ is $Y \! X$-semi-alternating and $|X| \geq |Y|-1$. By Definition \ref{DfAlternatingPath}, it remains to show that $P(Y) \subseteq X$. It is done by contradiction. Indeed, suppose that there exists $y \in Y$ and $z \notin X$ such $yz \in \mathbb{E}(P)$. We have either $z \in Y$ and so $\sigma_P(Y) \geq 1$, or $z \in \mathbb{V}(P) \setminus X \cup Y$. In the first case, we get $|X| \geq |Y| -1 \geq |Y| - \sigma_P(Y)$, contradicting Lemma \ref{LmGeneralYX}.\ref{LmGeneralYX.1}. In the second case we get $|X| < |Y|-1$ by Lemma \ref{LmGeneralYX}.\ref{LmGeneralYX.2}, contradicting $|X| \geq |Y|-1$.
\end{proof}

\section{N2-closures and Hamiltonicity}\label{SecMain}

The notions of N-eligible vertex and N-closure of a graph were defined in \cite{BreVal3}. A vertex $x$ is N-eligible if it is non-simplicial and $|\bar x| \geq |N(\bar x)|$. The N-closure $cl_N(G)$ of $G$ is obtained by performing a local completion for every N-eligible vertex of $G$. The mains result of \cite{BreVal3} states that, for every graph $G$, $G$ is a spanning subgraph of $cl_N(G)$, $cl_N(G)$ does not contain any N-eligible vertex and $c(G) = c(cl_N(G))$.

In this section, we first introduce a generalization of the notion of N-eligibility called the \emph{2-weighted N-eligibility (N2-eligibility)}. Then, after having described some techniques for transforming semi-alternating paths into cycles, we show in Theorem \ref{ThmN2Closure} that the main result of \cite{BreVal3} can be extended to N2-closures of graphs.

\subsection{N2-eligibility: definition}

The N2-eligibility is defined using a kind of weight-function $\chi _2$, the weight depending of the number of edges in $E(N(\bar x))$. More precisely, $\chi _2$ counts the number of edges in $E(N(\bar x))$ up to $2$. It is easy to see that every N-eligible vertex is also N2-eligible.

\begin{definition} Let $G$ be a graph. We define the function $\chi_2 : V \mapsto \{0,1,2\}$ by, for every $x \in V$:

$\chi_2 (x) = \left\{
\begin{array}{lll}
0 & \texttt{if $ \sigma (N(\bar x)) = 0$}\\
1 & \texttt{if $ \sigma (N(\bar x))= 1$}\\
2 & \texttt{otherwise}
\end{array}
\right.$
\end{definition}

\begin{definition}\label{DfN2-Eligible} A vertex $x$ of $G$ is \emph{2-weighted N-eligible (N2-eligible)} if $x \in N\!S$ and $|\bar x| \geq |N(\bar x)| - \chi_2 (x)$. 
\end{definition}

\subsection{Building cycles from semi-alternating paths}

In this section, we assume a graph $G = (V,E)$, a N2-eligible vertex $x \in V$ and a $Y\!\bar x$-semi-alternating path $P$ in $G_x$ such that $Y \subseteq N(\bar x)$. We remind the reader that $G$ is a spanning subgraph of $G_x$. Notice that the graph $G$ of the previous section corresponds here to $G_x$.

\begin{fact}\label{FaBasicYBarXAlternating} Since $P$ is $Y\!\bar x$-semi-alternating and $Y \subseteq N(\bar x)$, it comes:
\begin{enumerate}
\item\label{FaBasicYBarXAlternating.1} $[Y,\bar x] \subseteq E$.
\item\label{FaBasicYBarXAlternating.2} $|Y| - \sigma_P(Y) >  |\bar x| \geq |Y| - \chi_2(x)$.
\item\label{FaBasicYBarXAlternating.3} $\chi_2(x) > \sigma_P(Y)$ and $\chi_2(x) \in \{1,2\}$.
\item\label{FaBasicYBarXAlternating.4} $\chi_2(x) - \sigma_P(Y) \in \{1,2\}$.
\end{enumerate}
\end{fact}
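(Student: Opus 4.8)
The plan is to establish the four items in sequence, each following quickly once the bookkeeping is set up correctly. The key point to fix from the outset is which graph each quantity lives in: $\bar x$, $N(\bar x)$, $\chi_2(x)$ and the N2-eligibility inequality all refer to $G$, whereas $P$ (hence $\sigma_P(Y)$ and any application of the results of the previous section) lives in $G_x$, the graph playing the role of the generic $G$ there. Since $\sigma_P(Y)$ depends only on the edge set of the path $P$, and the cardinalities $|\bar x|, |Y|, |N(\bar x)|$ are graph-independent, these two layers combine without friction.

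For item \ref{FaBasicYBarXAlternating.1}, I would simply invoke Fact \ref{FaBasicN-equivalence}.\ref{FaBasicN-equivalence.1}, which gives $[N(\bar x), \bar x] \subseteq E$, together with the monotonicity of the bracket operation noted in the Preliminaries: since $Y \subseteq N(\bar x)$, we get $[Y, \bar x] \subseteq [N(\bar x), \bar x] \subseteq E$. For item \ref{FaBasicYBarXAlternating.2}, the left inequality is exactly Lemma \ref{LmGeneralYX}.\ref{LmGeneralYX.1} applied to the $Y\bar x$-semi-alternating path $P$ (with $X = \bar x$), yielding $|\bar x| < |Y| - \sigma_P(Y)$. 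For the right inequality, N2-eligibility (Definition \ref{DfN2-Eligible}) gives $|\bar x| \geq |N(\bar x)| - \chi_2(x)$, and combining with $|Y| \leq |N(\bar x)|$ (because $Y \subseteq N(\bar x)$) gives $|\bar x| \geq |N(\bar x)| - \chi_2(x) \geq |Y| - \chi_2(x)$.

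Items \ref{FaBasicYBarXAlternating.3} and \ref{FaBasicYBarXAlternating.4} are then pure arithmetic consequences of item \ref{FaBasicYBarXAlternating.2} and the fact that $\chi_2(x) \in \{0,1,2\}$. Chaining the two inequalities of item \ref{FaBasicYBarXAlternating.2} gives $|Y| - \sigma_P(Y) > |Y| - \chi_2(x)$, hence $\chi_2(x) > \sigma_P(Y)$; since $\sigma_P(Y) \geq 0$, this forces $\chi_2(x) \geq 1$, so $\chi_2(x) \in \{1,2\}$, which is item \ref{FaBasicYBarXAlternating.3}. Finally, $\chi_2(x) > \sigma_P(Y)$ together with $\sigma_P(Y) \geq 0$ and $\chi_2(x) \leq 2$ yields $1 \leq \chi_2(x) - \sigma_P(Y) \leq 2$, and integrality gives item \ref{FaBasicYBarXAlternating.4}. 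There is no genuine obstacle in this Fact; its role is to repackage the N2-eligibility hypothesis and Lemma \ref{LmGeneralYX} into the precise numerical inequalities that the later cycle-construction arguments will exploit, so the only care required is the correct identification of $X$ with $\bar x$ and of the ambient graph with $G_x$.
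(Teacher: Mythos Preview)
Your proposal is correct and follows essentially the same approach as the paper's own proof: item 1 from Fact \ref{FaBasicN-equivalence}.\ref{FaBasicN-equivalence.1} and $Y \subseteq N(\bar x)$, item 2 from Lemma \ref{LmGeneralYX}.\ref{LmGeneralYX.1} together with the N2-eligibility inequality, and items 3 and 4 by straightforward arithmetic from item 2. Your extra care in distinguishing which graph ($G$ versus $G_x$) each quantity lives in is a welcome clarification that the paper leaves implicit.
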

\begin{proof} Since $Y \subseteq N(\bar x)$, the first point comes immediatly from Fact \ref{FaBasicN-equivalence}.\ref{FaBasicN-equivalence.1} and we have $|\bar x| \geq |N(\bar x)| - \chi_2(x) \geq |Y| - \chi_2(x)$ by N2-eligibility of $x$. We have also $|\bar x| < |Y| - \sigma_P (Y)$ by Lemma \ref{LmGeneralYX}.\ref{LmGeneralYX.1}. That proves the second point which in turn implies $|Y| - \sigma_P(x) >  |Y| - \chi_2(x)$ and so $\chi_2(x) > \sigma_P(Y)$. We get $\chi_2(x) \neq 0$ and so the third point, which implies easily the fourth one.
\end{proof}

\begin{lemma}\label{LmFromAlternatingToC} If $P$ is $Y\!\bar x$-alternating then, for every path $Q$ in $G_x$ with distinct endpoints in $Y$ and without any other common vertex with $P$, there exists a cycle $C$ in $G_x$ such that $\mathbb{V}(C) = \mathbb{V}(P)\cup \mathbb{V}(Q)$ and $\mathbb{E}(C) \subseteq  \mathbb{E}(P) \cup [Y,\bar x] \cup \mathbb{E}(Q)$. 
\end{lemma}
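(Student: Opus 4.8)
The plan is to exploit the rigid combinatorial structure that the $Y\bar x$-alternating hypothesis forces on $P$, together with the complete bipartite connection between $Y$ and $\bar x$, to reroute everything into a single cycle. First I would invoke Fact \ref{FaAlternatingPath} to fix enumerations $y_0, \dots, y_n$ of $Y$ and $x_0, \dots, x_{n-1}$ of $\bar x$ with $P = y_0 x_0 y_1 x_1 \cdots x_{n-1} y_n$, so that in particular $\mathbb{V}(P) = Y \cup \bar x$ and $|\bar x| = |Y| - 1 = n$. The second ingredient is Fact \ref{FaBasicYBarXAlternating}.\ref{FaBasicYBarXAlternating.1}, which gives $[Y,\bar x] \subseteq E \subseteq E_x$; thus in $G_x$ every vertex of $Y$ is joined to every vertex of $\bar x$, and since every edge of $P$ joins consecutive (hence alternating) vertices we even have $\mathbb{E}(P) \subseteq [Y,\bar x]$. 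Consequently it suffices to build $C$ using only edges of $[Y,\bar x] \cup \mathbb{E}(Q)$.

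Let $y_a, y_b$ (with $a \neq b$) be the endpoints of $Q$; by hypothesis the interior vertices of $Q$ avoid $\mathbb{V}(P)$. The idea is to traverse $Q$ from $y_a$ to $y_b$ and then return from $y_b$ to $y_a$ along a path $R$ in $G_x$ that sweeps up every remaining vertex of $P$, namely the $n$ vertices of $\bar x$ together with the $n-1$ vertices of $Y \setminus \{y_a, y_b\}$. Because $[Y,\bar x]$ is complete bipartite and the number of $Y$-vertices to be used in $R$ (the two endpoints plus the $n-1$ others, giving $n+1$) exceeds the number of $\bar x$-vertices by exactly one, I can lay these out in an alternating pattern $R = y_b\, x_{0}\, y'_1\, x_{1}\, y'_2 \cdots y'_{n-1}\, x_{n-1}\, y_a$, where $y'_1, \dots, y'_{n-1}$ enumerates $Y \setminus \{y_a, y_b\}$. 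Every edge of $R$ joins a $Y$-vertex to an $\bar x$-vertex, hence lies in $[Y,\bar x] \subseteq E_x$, so $R$ is a genuine path in $G_x$.

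Concatenating, let $C$ be obtained by following $Q$ from $y_a$ to $y_b$ and then $R$ from $y_b$ back to $y_a$. Since the interior of $Q$ is disjoint from $\mathbb{V}(P)$ while $R$ visits only $P$-vertices other than $y_a, y_b$, the sole vertices shared by $Q$ and $R$ are the two endpoints, so $C$ is a closed walk with no repeated vertex except at its common endpoint, i.e. a cycle. By construction $\mathbb{V}(C) = \mathbb{V}(Q) \cup \bar x \cup Y = \mathbb{V}(Q) \cup \mathbb{V}(P)$ and $\mathbb{E}(C) \subseteq \mathbb{E}(Q) \cup [Y,\bar x] \subseteq \mathbb{E}(P) \cup [Y,\bar x] \cup \mathbb{E}(Q)$, as required.

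The argument is almost entirely bookkeeping; the one place demanding care, and the step I expect to be the crux, is the counting that licenses the alternating return path $R$. It is precisely the alternating hypothesis, through $|\bar x| = |Y|-1$ in Fact \ref{FaAlternatingPath}.\ref{FaAlternatingPath.1}, that makes the $n$ vertices of $\bar x$ interleave perfectly with the $n+1$ vertices of $Y$ between the prescribed endpoints $y_b$ and $y_a$; for a merely semi-alternating $P$ this count would fail, which is why the lemma is confined to the alternating case. I would also flag the degenerate case $n = 0$, where $|Y| = 1$ admits no $Q$ with distinct endpoints and the statement is vacuous, so that the enumeration defining $R$ is always well posed.
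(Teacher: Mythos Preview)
Your argument is correct, and it follows a genuinely different route from the paper. The paper keeps as much of $P$ intact as possible: with $Q$ running from $y_i$ to $y_j$ (say $i<j$), it splices $Q$ into $P$ and then uses only one or two ``shortcut'' edges from $[Y,\bar x]$ (namely $x_i y_0$ when $j=n$, and $x_i y_n$, $x_j y_0$ when $j<n$) to close up the cycle, at the cost of a small case split. You instead observe that $\mathbb{E}(P)\subseteq [Y,\bar x]$ already, so there is no reason to preserve the particular ordering that $P$ imposes: you simply rebuild the return path $R$ from scratch as any alternating $y_b$--$y_a$ path through the complete bipartite pair $(Y,\bar x)$, using the count $|\bar x|=|Y|-1$ from Fact~\ref{FaAlternatingPath}.\ref{FaAlternatingPath.1} to guarantee the interleaving works. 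Your approach is a little more conceptual and avoids the case distinction; the paper's approach has the minor virtue of exhibiting $C$ with $\mathbb{E}(C)\setminus(\mathbb{E}(P)\cup\mathbb{E}(Q))$ consisting of at most two edges, but since the lemma is only ever invoked through the inclusion $\mathbb{E}(C)\subseteq\mathbb{E}(P)\cup[Y,\bar x]\cup\mathbb{E}(Q)$, that extra precision is never needed.
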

\begin{proof} Let $P= y_0x_0 \dotsc x_{n-1} y_n$, where $y_0, \dotsc y_n$ is an enumeration of $Y$ and $x_0, \dotsc , x_{n-1}$ an enumeration of $\bar x$. Let $Q$ be a path satisfying the conditions of the lemma and let $y_i,y_j \in \mathbb{V}(P)\cap Y$, $i,j \in \{0, \dotsc ,n\}$, be the endpoints of $Q$, that is $Q = y_i \stackrel{\rightarrow}{Q}y_j$. We have $i\neq j$, since these endpoints are distinct, and we can suppose $i <j$. Notice that $[Y, \,\{x_i, x_j\}] \subseteq E$ (Fact \ref{FaBasicYBarXAlternating}.\ref{FaBasicYBarXAlternating.1}) and that $\mathbb{V}(Q) \cap \mathbb{V}(P) = \{y_i, y_j\}$ by hypothesis. Hence, if $j =n$ then $C=y_0 \stackrel{\rightarrow}{P} y_i \stackrel{\rightarrow}{Q} y_n\stackrel{\leftarrow}{P} x_iy_0$ is a cycle in $G_x$. If $j <n$ then $C=y_0 \stackrel{\rightarrow}{P} y_i \stackrel{\rightarrow}{Q} y_j \stackrel{\leftarrow}{P} x_i y_n \stackrel{\leftarrow}{P}x_j y_0$ is a cycle in $G_x$. Clearly, in both cases, $\mathbb{V}(C) = \mathbb{V}(P)\cup \mathbb{V}(Q)$ and every edge of $C$ not already in $P$ is either in $Q$ or in $[Y,\, \{x_i,x_j\}] \subseteq [Y, \bar x]$. Hence, $\mathbb{E}(C) \subseteq  \mathbb{E}(P) \cup  [Y, \bar x] \cup \mathbb{E}(Q)$.
\end{proof}

\begin{lemma}\label{LmN2CompletionCircumference} If $P$ is $Y\!\bar x$-alternating and there exists a cycle $C$ in $G_x$ of maximal length such that $\mathbb{V}(C) = \mathbb{V}(P)$ then there exists a cycle $C'$ in $G_x$ such that $\mathbb{V}(C') = \mathbb{V}(P)$ and $\mathbb{E}(C') \subseteq \mathbb{E}(P) \cup E$. 
\end{lemma}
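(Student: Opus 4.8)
The plan is to exploit the fact that, since $P$ is $Y\bar x$-alternating, \emph{every} edge of $P$ already lies in $E$. Writing $P = y_0 x_0 \dots x_{n-1} y_n$ as in Fact~\ref{FaAlternatingPath} (so $|Y| = |\bar x| + 1$ with $n = |\bar x|$), each edge of $P$ joins a vertex of $Y$ to a vertex of $\bar x$, and $[Y,\bar x] \subseteq E$ by Fact~\ref{FaBasicYBarXAlternating}.\ref{FaBasicYBarXAlternating.1}. Hence $\mathbb{E}(P) \subseteq E$, and the target $\mathbb{E}(C') \subseteq \mathbb{E}(P) \cup E$ is just the request for a cycle $C'$ with $\mathbb{V}(C') = \mathbb{V}(P) = Y \cup \bar x$ all of whose edges lie in $E$. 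Since $\bar x$ is a clique of $G$ (Fact~\ref{FaBasicN-equivalence}.\ref{FaBasicN-equivalence.1}) and $[Y,\bar x] \subseteq E$, the only pairs inside $Y \cup \bar x$ that can fail to be $E$-edges are the ones inside $Y$. I would therefore reduce the entire statement to exhibiting a single $E$-edge inside $Y$, and then assembling $C'$ around it.

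The crux is to prove $E(Y) \neq \emptyset$, and this is where the maximality of $C$ must be used. First note, by a counting remark, that any cycle through $Y \cup \bar x$ must contain an edge inside $Y$: as $|Y| = |\bar x| + 1$, the set $Y$ occupies more than half of the $2n+1$ vertices of $\mathbb{V}(P)$, so $Y$ cannot be independent along such a cycle. Now suppose for contradiction that $E(Y) = \emptyset$. Applying the remark to $C$ itself yields an edge $y_a y_b \in \mathbb{E}(C)$ with $y_a,y_b \in Y$, which must be artificial (a $B_x$-edge). Because $y_a,y_b \in N(\bar x)$, any $u \in N(\bar x)\setminus Y$ satisfies $u y_a, u y_b \in E_x$; moreover $N(\bar x)\setminus Y = N(\bar x)\setminus \mathbb{V}(C)$, since $\mathbb{V}(C) = Y \cup \bar x$, $Y \subseteq N(\bar x)$ and $\bar x \cap N(\bar x) = \emptyset$. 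Thus such a $u$ is off $C$, and replacing the edge $y_a y_b$ of $C$ by the detour $y_a\, u\, y_b$ produces a strictly longer cycle in $G_x$, contradicting the maximality of $C$. Hence $N(\bar x)\setminus Y = \emptyset$, i.e. $N(\bar x) = Y$, whence $\sigma(N(\bar x)) = |E(Y)| = 0$; but Fact~\ref{FaBasicYBarXAlternating}.\ref{FaBasicYBarXAlternating.3} gives $\chi_2(x) \geq 1$, forcing $\sigma(N(\bar x)) \geq 1$, a contradiction. Therefore $E(Y) \neq \emptyset$.

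Finally, fixing an edge $y_a y_b \in E(Y)$, I would build $C'$ explicitly. Enumerating $Y\setminus\{y_a,y_b\} = \{y_{c_1},\dots,y_{c_{n-1}}\}$ and $\bar x = \{x_0,\dots,x_{n-1}\}$, set $C' = y_a\, y_b\, x_0\, y_{c_1}\, x_1\, y_{c_2} \cdots y_{c_{n-1}}\, x_{n-1}\, y_a$. This is a cycle on exactly $Y \cup \bar x = \mathbb{V}(P)$, of length $|\mathbb{V}(C)| \geq 3$: its unique edge inside $Y$ is $y_a y_b \in E$, while every other edge joins $Y$ to $\bar x$ and so lies in $[Y,\bar x] \subseteq E$. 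Consequently $\mathbb{E}(C') \subseteq E \subseteq \mathbb{E}(P)\cup E$, as required. I expect the delicate step to be the middle paragraph: one must apply the maximality of $C$ through the right local surgery, taking care that the inserted vertex $u$ genuinely lies off $C$, and recognizing that the only escape from the contradiction, namely $N(\bar x) = Y$, is excluded precisely by the weight hypothesis $\chi_2(x) \geq 1$ coming from $Y\bar x$-alternation.
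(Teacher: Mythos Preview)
Your proof is correct, and in fact a bit slicker than the paper's. Both arguments hinge on producing a single edge of $E$ inside $Y$ and then closing up a cycle through $Y\cup\bar x$, and both invoke the maximality of $C$ to rule out a stray vertex of $N(\bar x)\setminus Y$; but the mechanics differ. The paper never records the simplification $\mathbb{E}(P)\subseteq E$; instead it sets $d=\chi_2(x)-\sigma_P(Y)\in\{1,2\}$, does a case analysis on $d$ (and, when $d=2$, a further subcase analysis on how the one or two edges of $E(N(\bar x))$ sit relative to $Y$), and uses Lemma~\ref{LmFromAlternatingToC} with $Q=uv$ to assemble $C'$. Your pigeonhole observation that any cycle on $Y\cup\bar x$ must contain a $YY$-edge lets you run the contradiction more directly: either that edge already lies in $E(Y)$, or it is artificial and you can splice in a vertex of $N(\bar x)\setminus Y$ to lengthen $C$, forcing $N(\bar x)=Y$ and hence $\sigma(N(\bar x))=0$, against $\chi_2(x)\geq 1$. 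Your explicit cycle $y_a y_b x_0 y_{c_1}\cdots x_{n-1} y_a$ also bypasses Lemma~\ref{LmFromAlternatingToC} entirely. What the paper's route buys is uniformity: the same Lemma~\ref{LmFromAlternatingToC} and the same ``find an $E(Y)$-edge off $\mathbb{E}(P)$'' pattern are reused in Lemma~\ref{LmN2CompletionCircumferenceProper}, where $P$ is only semi-alternating and your shortcut $\mathbb{E}(P)\subseteq E$ no longer holds.
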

\begin{proof} Suppose that $P$ is $Y\!\bar x$-alternating and let $C$ be a cycle in $G_x$ of maximal length such that $\mathbb{V}(C) = \mathbb{V}(P)$. Let $d = \chi_2(x) - \sigma_P(Y)$. We have $\sigma_P(Y) = \chi_2(x) - d$ and so, from Fact \ref{FaBasicYBarXAlternating}.\ref{FaBasicYBarXAlternating.2}, we have $|Y| -  \chi_2(x) + d >  | \bar x| \geq |N(\bar x) | - \chi_2(x)  \geq |Y| - \chi_2(x)$. Hence, in particular: $(1) \; \;   |Y| + d >   |N(\bar x) | \geq |Y|$.

We show now that there is an edge $uv \in E(Y)$ such that $uv \notin \mathbb{E}(P)$. If $d =1$ then there exists a unique edge $uv \in E(N(\bar x))$ such that $uv \notin \mathbb{E}(P)\cap [Y,Y]$. From $(1)$, we get $|Y| +1 > |N(\bar x) | \geq  |Y|$ and so, since $Y \subseteq N(\bar x)$, $Y = N(\bar x)$ and so $uv \in E(Y)$. If now $d =2$ then there exist two edges $uv,u'v' \in E(N(\bar x))$ such that $uv, u'v' \notin \mathbb{E}(P) \cap [Y,Y]$. If at least three distinct vertices among $u,v,u',v'$ belong to $Y$ then at least two belong to the same edge $uv$ or $u'v'$, and we get the result we are looking for. Suppose now that at most two vertices among $u,v,u',v'$ belong to $Y$. From $(1)$, we have $|Y| +2 > |N(\bar x) | \geq |Y|$. Hence there exists at most one vertex in $N(\bar x) \setminus Y$. So $uv$ and $u'v'$ must have a common vertex, otherwise $u,v,u',v'$ would be distinct vertices of $N(\bar x)$ and so at least three of them would be in $Y$. Without loss of generality, we can suppose $v =u'$. Notice that, since $|N(\bar x) \setminus Y| \leq 1$, at least two vertices among $u,v,v'$ are in $Y$ and so exactly two of them are. If they both belong to the same edge, we have the result we are looking for. We show now that the second case is impossible. Indeed, if we suppose $u,v' \in Y$, and since $v \notin \bar x \cup Y = \mathbb{V}(P)$ (Fact \ref{FaAlternatingPath}.\ref{FaAlternatingPath.3}), the path $Q = uvv'$ satisfies the conditions of Lemma \ref{LmFromAlternatingToC}. Hence, there exists a cycle $C'$ in $G_x$ such that $\mathbb{V}(C') = \mathbb{V}(P)\cup \mathbb{V}(Q)$. But $\mathbb{V}(P)\cup \mathbb{V}(Q) = \mathbb{V}(P) \cup \{v\}$ and since $\mathbb{V}(P) =  \mathbb{V}(C)$ by hypothesis, $C'$ is a cycle containing one more vertex than $C$, contradicting the maximality of $C$. 

Hence, we have shown that there exists an edge $uv \in E(Y)$ such that $uv \notin \mathbb{E}(P)$. It remains to notice that $Q = uv$ is a path satisfying the condition of Lemma \ref{LmFromAlternatingToC}. Hence there exists a cycle $C'$ such that $\mathbb{V}(C') = \mathbb{V}(P)\cup \mathbb{V}(Q)$ and $\mathbb{E}(C') \subseteq \mathbb{E}(P) \cup [Y,\bar x] \cup \{uv\}$. Since $ \mathbb{V}(P)\cup \mathbb{V}(Q)= \mathbb{V}(P)$ and $[Y ,\bar x] \cup \{uv\} \subseteq E$ (Fact \ref{FaBasicYBarXAlternating}.\ref{FaBasicYBarXAlternating.1} and $uv \in E(Y)$), $C'$ is the cycle we are looking for.
\end{proof}

\begin{lemma}\label{LmN2CompletionCircumferenceProper} If $P$ is a proper $Y\!\bar x$-semi-alternating path then there exists a cycle $C$ such that $\mathbb{V}(C) = \mathbb{V}(P)$ and $\mathbb{E}(C) \subseteq \mathbb{E}(P) \cup E$. 
\end{lemma}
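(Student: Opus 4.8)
The plan is to first pin down the rigid numerical situation forced by properness, then to decompose $P$ along its $\bar x$-vertices into \emph{blocks} and reassemble those blocks into a cycle, using the vertices of $\bar x$ and one spare edge of $E(Y)$ as connectors.

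First I would extract the rigidity. Since $P$ is proper it is not $Y\bar x$-alternating, so Lemma~\ref{LmGeneralYXTer} gives $|\bar x| < |Y|-1$, i.e. $|\bar x| \le |Y|-2$. On the other hand Fact~\ref{FaBasicYBarXAlternating}.\ref{FaBasicYBarXAlternating.2} gives $|\bar x| \ge |Y| - \chi_2(x)$ with $\chi_2(x)\le 2$, hence $|\bar x| \ge |Y|-2$. Thus $|\bar x| = |Y|-2$ and $\chi_2(x)=2$. By N2-eligibility $\chi_2(x)=2$ forces $|\bar x| \ge |N(\bar x)|-2$, while $Y\subseteq N(\bar x)$ gives $|N(\bar x)|\ge|Y| = |\bar x|+2$, so $Y = N(\bar x)$ and therefore $|E(Y)| = \sigma(N(\bar x)) \ge 2$; in particular $|Y|\ge 3$.

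Next I would decompose $P$. Since $P$ is $Y\bar x$-semi-alternating, every vertex of $\bar x$ on $P$ has both its $P$-neighbours in $Y$, and the endpoints of $P$ lie in $Y$; hence deleting the $|\bar x|$ vertices of $\bar x$ splits $P$ into exactly $|\bar x|+1$ nonempty subpaths (blocks), each having both endpoints in $Y$ and containing no vertex of $\bar x$. Every block contains at least one vertex of $Y$, and the total number of $Y$-vertices is $|Y| = |\bar x|+2$; so exactly one block, say $B^\ast$, carries two vertices of $Y$ while each of the others is a single vertex of $Y$. Consequently every vertex of $\mathbb V(P)\setminus(\bar x\cup Y)$ is interior to $B^\ast$, and the only pair of $Y$-vertices lying in a common block is the pair of endpoints of $B^\ast$. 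Then, since $|E(Y)|\ge 2$ and at most one edge of $E(Y)$ can join two vertices of the same block, there is an edge $uv\in E(Y)$ whose ends lie in two distinct blocks $B_u, B_v$. I arrange the $|\bar x|+1$ blocks in a cyclic order with $B_u,B_v$ consecutive, traverse each block between its two $Y$-endpoints along its $P$-edges, join the $u$-end of $B_u$ to the $v$-end of $B_v$ by $uv$, and join each of the remaining $|\bar x|$ consecutive pairs of blocks through a distinct vertex of $\bar x$; this is legitimate because $[Y,\bar x]\subseteq E$ (Fact~\ref{FaBasicYBarXAlternating}.\ref{FaBasicYBarXAlternating.1}) makes each vertex of $\bar x$ adjacent in $E$ to every block-endpoint. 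The $|\bar x|$ connectors use the $|\bar x|$ vertices of $\bar x$ exactly once, so the result is a single cycle $C$ with $\mathbb V(C)=\mathbb V(P)$ and $\mathbb E(C)\subseteq \mathbb E(P)\cup[Y,\bar x]\cup E(Y)\subseteq \mathbb E(P)\cup E$, as required.

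The main obstacle is the block-counting bookkeeping of the previous paragraph together with the verification that the reassembled object is genuinely a single cycle meeting each vertex of $\mathbb V(P)$ exactly once. Care is also needed in the degenerate flavours of $B^\ast$: it may consist of a single edge joining its two $Y$-endpoints (when $\sigma_P(Y)=1$) or may have interior vertices lying in $\mathbb V(P)\setminus(\bar x\cup Y)$ (when $\sigma_P(Y)=0$), and in the former case one must check that a suitable inter-block edge $uv$ still exists whether that edge lies in $E$ or only in $B_x$. An alternative would be to feed an auxiliary path $Q$ into Lemma~\ref{LmFromAlternatingToC}, but the requirement that $Q$ have both endpoints in $Y$ makes it awkward to absorb simultaneously the surplus $Y$-vertex and the interior of $B^\ast$, so the direct assembly above seems cleaner.
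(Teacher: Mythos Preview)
Your argument is correct, and its opening paragraph (the rigidity $|\bar x|=|Y|-2$, $\chi_2(x)=2$, $Y=N(\bar x)$, hence $|E(Y)|\ge 2$) is exactly what the paper establishes first. The difference is in how the cycle is built. The paper enumerates $Y=\{y_1,\dots,y_n\}$ and $\bar x=\{x_1,\dots,x_{n-2}\}$ in their order along $P$, locates the unique index $i$ with $y_i^+\notin\bar x$, shows that $P$ decomposes as $P_0\,Q\,P_1$ with $P_0=y_1x_1\dots x_{i-1}y_i$ and $P_1=y_{i+1}x_i\dots x_{n-2}y_n$ both alternating and $Q$ the (possibly empty) stretch strictly between $y_i$ and $y_{i+1}$, finds an edge $y_ky_l\in E(Y)$ with $y_ky_l\neq y_iy_{i+1}$ and $y_ky_l\notin\mathbb E(P)$, and then constructs $C$ by an explicit case analysis on the position of $k,l$ relative to $i,i{+}1$, writing down in each branch a concrete cycle such as $y_1\stackrel{\rightarrow}{P}y_ky_l\stackrel{\rightarrow}{P}y_n\,y_k^+\stackrel{\rightarrow}{P}y_l^-\,y_1$.

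Your block decomposition is the same splitting viewed abstractly---$B^\ast$ is the paper's segment $y_i\stackrel{\rightarrow}{P}y_{i+1}$ and the singleton blocks are the remaining $y_j$---but your reassembly is genuinely tidier: because $[Y,\bar x]\subseteq E$ makes every vertex of $\bar x$ a universal connector between block-endpoints, you are free to place the $|\bar x|+1$ blocks in \emph{any} cyclic order (with $B_u,B_v$ adjacent) and thread the $|\bar x|$ vertices of $\bar x$ through the remaining gaps; the case analysis disappears entirely. Your observation that at most one pair of $Y$-vertices shares a block also yields the existence of an inter-block edge $uv\in E(Y)$ in one line, whereas the paper separates the cases $y_{i+1}=y_i^+$ and $y_{i+1}\neq y_i^+$ to reach the same conclusion. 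The trade-off is that the paper's explicit cycle formulas are immediately verifiable by inspection, while your argument is shorter and makes clear structurally why no obstruction can arise.
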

\begin{proof} Suppose that $x$ and $P$ satisfy the conditions of the lemma. Since $P$ is proper, we have $|\bar x| < |Y| - 1$ by Lemma \ref{LmGeneralYXTer}. Hence $|Y | -2 \geq |\bar x|$ and since $\chi _2(x) \leq 2$, it comes also $|\bar x| \geq |Y| - \chi_2(x) \geq |Y | - 2$ by Fact \ref{FaBasicYBarXAlternating}.\ref{FaBasicYBarXAlternating.2}. Hence, we have $|\bar x| = |Y| -2$ and so $\chi_2(x) = 2$, otherwise we would have $\chi_2(x) =1$ by Fact \ref{FaBasicYBarXAlternating}.\ref{FaBasicYBarXAlternating.3}, and so $|Y| -2 = |\bar x| \geq |Y| -1$ by Fact \ref{FaBasicYBarXAlternating}.\ref{FaBasicYBarXAlternating.2}. From $Y \subseteq N(\bar x)$, we have also $Y = N(\bar x)$, otherwise we would have $|Y| -2 = |\bar x| \geq |N(\bar x) | - 2 > |Y| -2$. Let now $n = |Y|$, $m= |\bar x|$, $y_1, \dotsc y_n$ be an enumeration of the vertices of $Y$ ordered as they appear in $P$ and $x_1, \dotsc , x_m$ be a similar enumeration for $\bar x$. Since $P$ is not $Y\! \bar x$-alternating and $P(\bar x) \subseteq Y$ (Definition \ref{DfAlternatingPath}), there is a smaller $i \in \{1, \dotsc ,n \! - \! 1\}$ such that $y_i^+ \notin \bar x$. Let $P_0 = y_1 \stackrel{\rightarrow}{P} y_i$ and $P_1 = y_{i+1} \stackrel{\rightarrow}{P} y_n$. Let also $X_j = \mathbb{V}(P_j) \cap \bar x$ and $Y_j = \mathbb{V}(P_j) \cap Y$, $n_j = | Y_j|$ and $m_j = |X_j|$, where $j \in \{0,1\}$. Clearly, since $P(\bar x) \subseteq Y$, we have $P_j(X_j) \subseteq Y_j$ and so $P_j$ is $Y_j\!X_j$-semi-alternating, $j \in \{0,1\}$. By minimality of $i$, $P_0 = y_1 x_1 \dotsc x_{i-1} y_i$ and so $P_0$ is $Y_0\! X_0$-alternating. Moreover, since $y_{i+1}$ is the next element of $Y$ appearing in $P$ after $y_i$, there is no vertex of $Y$ in between $y_i$ and $y_{i+1}$. In addition, since $P(\bar x) \subseteq Y$ and $y_i^+ \notin \bar x$, it is easy to see there is no vertex of $\bar x$ in between $y_i$ and $y_{i+1}$. Hence, $\bar x = X_0 \cup X_1$ and $Y =Y_0 \cup Y_1$ and, since $P_0$ contains $i$ vertices of $Y$ and $i-1$ vertices of $\bar x$, we have $n_1 = n - i$ and $m_1 = n-2 - (i-1)$. It comes $m_1 = n_1-1$ and so by Lemma \ref{LmGeneralYXTer}, $P_1$ is the alternative sequence $y_{i+1} x_i \dotsc x_{n-2} y_n$.\smallskip 

We show now that there exists an edge $uv \in E(Y)$ such that $uv \notin \mathbb{E}(P)$ and $uv \neq y_iy_{i+1}$. Let $d =  \chi_2(x) - \sigma_P(Y)$. Since $d \geq 1$ (Fact \ref{FaBasicYBarXAlternating}.\ref{FaBasicYBarXAlternating.2}), there exists an edge $uv \in E(N(\bar x))$ such that $uv \notin \mathbb{E}(P) \cap [Y,Y]$. Since $Y = N(\bar x)$, we have $uv \in E(Y)$. If now $y_{i+1} = y_i^+$, that is, $y_iy_{i+1} \in \mathbb{E}(P)$ and $P =P_0P_1$, we have $uv \neq y_iy_{i+1}$, since $uv \notin \mathbb{E}(P)$, and so $uv$ is the edge we are looking for. Now if $y_{i+1}\neq  y_i^+$, since $y_{i+1}$ is the next vertex of $Y$ after $y_i$ on $P$ and since neither $P_0$ nor $P_1$ contains an edge in $[Y,Y]$, it is clear that $P$ contains no such edge. Hence $d =2 - 0 =2$ and there exists another edge $u'v' \in E(N(\bar x))$ such that $u'v' \notin \mathbb{E}(P) \cap [Y, Y]$. Since $Y = N(\bar x)$, we have $u'v' \in E(Y)$ and so at least one edge among $uv$ and $u'v'$ must be different from $y_{i}y_{i+1}$. Without loss of generality, we can suppose $uv$ this edge.\smallskip

Finally, we show that there exists a cycle $C$ in $G_x$ such that $\mathbb{V}(C) = \mathbb{V}(P)$ and $\mathbb{E}(C)  \subseteq \mathbb{E}(P) \cup E$. 
Since $[Y, \bar x] \cup \{uv\} \subseteq E$ (Fact \ref{FaBasicYBarXAlternating}.\ref{FaBasicYBarXAlternating.1} and $uv \in E(Y)$), it is sufficient to show that there is a cycle $C$ in $G_x$ such that $\mathbb{V}(C) = \mathbb{V}(P)$ and $\mathbb{E}(C) \subseteq  \mathbb{E}(P) \cup [Y, \bar x] \cup \{uv\}$. We define now $Q= \emptyset$ if $y_{i+1} = y_i^+$, and $Q = y^+_i \stackrel{\rightarrow}{P} y^-_{i+1}$ otherwise. We have $P = P_0QP_1$ where $P_0QP_1$ is defined as $P_0P_1$ if $Q = \emptyset$. Since $uv \in E(Y)$, there are $k,l \in \{1, \dotsc , n\}$ such that $uv =y_k y_l$. Without loss of generality we can suppose $k <l$, and so $k\neq n$ and $l\neq 1$. Moreover, we have $y_ky_l \neq y_iy_{i+1}$ since $y_ky_l = uv \neq y_iy_{i+1}$. Finally, since $P = y_1x_1 \dotsc x_{i-1}y_i Q y_{i+1}x_i \dotsc x_{n-2}y_n$ and $\mathbb{V}(Q) \cap Y = \emptyset = \mathbb{V}(Q) \cap X$, it is clear that the successor $y^+$ in $P$ of every vertex $y \in Y$ is in $\bar x$, except for $y_i$ and $y_n$. In particular $y_n^+$ is not defined. Similarly, the predecessor $y^-$ of every $y \in Y$ is in $\bar x$ except for $y_{i+1}$ and $y_1$, where $y_1^-$ is not defined. We make now two cases:

\begin{itemize}
\item Suppose first $k =i$ and so, since $y_ky_l \neq y_iy_{i+1}$, $i+1 < l \leq n$. We have $y_l \in Y_1$, $y_l^- \in X_1$ and so $[Y, \, \{y_l^-\}] \subseteq [Y, \bar x] \subseteq E$. Now if $y_k^- \notin \bar x$, we have either $k=1$ or $k = i+1$. The second case is impossible, since $k=i$, and so $k = 1$. Hence, $P_0 = y_1$ and $C = y_1y_l\stackrel{\rightarrow}{P} y_n y_l^- \stackrel{\leftarrow}{P} y_1$ is the cycle we are looking for. If now $y_k^- \in \bar x$ then $k \neq 1$, $[Y, \{y_k^- \}] \subseteq [Y, \bar x] \subseteq E$ and $C = y_1 y_l^- \stackrel{\leftarrow}{P} y_k y_l \stackrel{\rightarrow}{P} y_n y_k^- \stackrel{\leftarrow}{P} y_1$ is the cycle we are looking for.\smallskip

\item Suppose now $k \neq i$. We have $y_k^+ \in \bar x$ and so $[Y, \{y_k^+\}] \subseteq [Y,\bar x] \subseteq E$. If $y_l^- \in \bar x$ then $[Y, \{y_l^- \}] \subseteq [Y, \bar x] \subseteq E$ and so $C = y_1 \stackrel{\rightarrow}{P} y_k y_l \stackrel{\rightarrow}{P} y_n y_k^+\stackrel{\rightarrow}{P}y_l^-y_1$ is the cycle we are looking for. If $y_l^+ \in \bar x$ then $[Y, \{y_l^+\}] \subseteq [Y, \bar x] \subseteq E$ and $C= y_1 \stackrel{\rightarrow}{P} y_k y_l \stackrel{\leftarrow}{P} y_k^+ y_n \stackrel{\leftarrow}{P} y_l^+y_1$ is the cycle we are looking for. Finally, if $y_l^- \notin \bar x$ then $l = i+1$. If moreover $y_l^+ \notin \bar x$ then either $l=i$ or $l=n$. The case $l =i$ being impossible, we have $l = i+1 =n$, $P_1 = y_n$ and $C = y_1  \stackrel{\rightarrow}{P} y_k y_n \stackrel{\leftarrow}{P}y_k^+ y_1$ is the cycle we are looking for.
\end{itemize}
\end{proof}

\subsection{The N2-closure operation preserves Hamiltonicity}\label{SecMainResults}

Theorem \ref{ThmN2Closure} is essentially a corollary of the proposition below, which states that the circumferencee is preserved by local completion at a N2-eligible vertex. 

\begin{proposition}\label{PrN2CompletionCircumference} If $x$ is a N2-eligible vertex of $G$ then $c(G_x) = c(G)$.
\end{proposition}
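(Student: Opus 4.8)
The plan is to prove the two inequalities $c(G)\le c(G_x)$ and $c(G_x)\le c(G)$ separately. The first is immediate: by Fact \ref{FaBasicLocalCompletion}, $G$ is a spanning subgraph of $G_x$, so every cycle of $G$ is a cycle of $G_x$ and hence $c(G)\le c(G_x)$. All the content lies in the reverse inequality, and for it I would observe that it suffices to exhibit, for one cycle $C$ of $G_x$ of length $c(G_x)$, a cycle $C'$ of $G$ with $\mathbb{V}(C')=\mathbb{V}(C)$; since a cycle has as many edges as distinct vertices, this gives $|C'|=|C|=c(G_x)$ and so $c(G)\ge c(G_x)$.

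So fix a longest cycle $C$ of $G_x$. Every edge of $G_x$ that is not an edge of $G$ lies in $B_x$, hence joins two vertices of $N(\bar x)$. If $\mathbb{E}(C)\cap B_x=\emptyset$ then $C$ is already a cycle of $G$ and we are done, so assume $C$ uses an edge $yz\in B_x$. Since $y,z\in N(\bar x)$ we have $[\{y,z\},\bar x]\subseteq[N(\bar x),\bar x]\subseteq E$ by Fact \ref{FaBasicN-equivalence}, so Lemma \ref{LmClassXonC} applied with $X=\bar x$ forces $\bar x\subseteq\mathbb{V}(C)$: every neighborhood-equivalent of $x$ already lies on the longest cycle. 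This is the structural fact that makes $\bar x$ available as a \emph{reservoir} of interchangeable simplicial vertices, each joined in $G$ to all of $N(\bar x)$.

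The heart of the argument is to reorganize $C$ into a path to which the transformation lemmas apply. Writing $Y=N(\bar x)\cap\mathbb{V}(C)$ and $X=\bar x$, I would rearrange the vertices of $\mathbb{V}(C)\cap N[x]$ — which is licit because $N[x]$ is complete in $G_x$, $\bar x$ is a clique in $G$, and $[\bar x,N(\bar x)]\subseteq E$ — so as to obtain, when $|\bar x|<|Y|$, a $Y\bar x$-semi-alternating path $P$ with $\mathbb{V}(P)=\mathbb{V}(C)$, the vertices of $\mathbb{V}(C)\setminus N[x]$ and the arcs of $C$ outside $N[x]$ playing the role of the block $Z=\mathbb{V}(P)\setminus(X\cup Y)$, whose incident edges, not lying inside $N(\bar x)$, are genuine edges of $G$. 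If $P$ is $Y\bar x$-alternating one is in the situation of Lemma \ref{LmN2CompletionCircumference} (the extremal cycle then lives in $N[x]$) or of Lemma \ref{LmFromAlternatingToC} after reattaching the external arc as the path $Q$; if $P$ is a proper $Y\bar x$-semi-alternating path one applies Lemma \ref{LmN2CompletionCircumferenceProper} directly. In every case the output is a cycle $C'$ with $\mathbb{V}(C')=\mathbb{V}(C)$ and $\mathbb{E}(C')\subseteq\mathbb{E}(P)\cup E$; since $P$ crosses $\bar x$ only along edges of $[\bar x,Y]\subseteq E$ and is genuine elsewhere, $\mathbb{E}(C')\subseteq E$ and $C'$ is the desired cycle of $G$.

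The step I expect to be the main obstacle is precisely this reorganization together with its bookkeeping, which is governed by N2-eligibility. Lemma \ref{LmGeneralYX} forces every $Y\bar x$-semi-alternating path to satisfy $|\bar x|<|Y|$, whereas N2-eligibility only guarantees $|\bar x|\ge|N(\bar x)|-\chi_2(x)\ge|N(\bar x)|-2$; the two are compatible only in the tight regime $|\bar x|\in\{|Y|-1,|Y|-2\}$, which Lemma \ref{LmGeneralYXTer} matches to the alternating ($\chi_2=1$) and the proper-semi-alternating ($\chi_2=2$) cases. When $\bar x$ is genuinely larger than $N(\bar x)$, not all of $\bar x$ can be threaded into a single semi-alternating path, and one must instead note that then $C$ carries an arc of two or more consecutive $\bar x$-vertices (a clique in $G$); relocating one such vertex $x'$ so as to split the offending edge $yz$ into the genuine path $y\,x'\,z$ removes one $B_x$-edge without creating another, and iterating reduces to the case $\mathbb{E}(C)\cap B_x=\emptyset$. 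Checking that, in the tight regime, a genuine edge $uv\in E(Y)$ is always available to substitute for the single (or the extra) added edge — which is exactly what Fact \ref{FaBasicYBarXAlternating} and the proofs of Lemmas \ref{LmN2CompletionCircumference} and \ref{LmN2CompletionCircumferenceProper} extract from $\chi_2(x)>\sigma_P(Y)$ — is the delicate point on which the preservation of circumference ultimately rests.
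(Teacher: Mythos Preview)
Your overall shape is right --- the easy inequality, the use of Lemma~\ref{LmClassXonC} to force $\bar x\subseteq\mathbb{V}(C)$, and the appeal to Lemmas~\ref{LmN2CompletionCircumference} and~\ref{LmN2CompletionCircumferenceProper} --- but the central ``rearrangement'' step is a genuine gap, and your attempt to finish in one shot rather than by induction on $|\mathbb{E}(C)\cap B_x|$ is where it fails.

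Concretely: you assert that after rearranging inside $N[x]$ you obtain a $Y\bar x$-semi-alternating path $P$ with $\mathbb{V}(P)=\mathbb{V}(C)$ all of whose edges are already in $E$, so that one application of the lemmas yields a cycle in $G$. But a semi-alternating path only forces $P(\bar x)\subseteq Y$; it says nothing about $P(Y)$, so $P$ can perfectly well carry edges in $[Y,Y]$, and since $Y\subseteq N(\bar x)$ such edges may lie in $B_x$. Your claim that $P$ ``is genuine elsewhere'' is therefore unjustified. Moreover, the rearrangement itself is underspecified: the external arcs of $C$ have endpoints outside $N[x]$ whose only guaranteed $G_x$-neighbours in $N[x]$ are their original $C$-neighbours, so you cannot freely permute the $N[x]$-vertices and still reattach the arcs.

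The paper sidesteps both difficulties by \emph{not} rearranging at all. It simply deletes one edge $yz\in B_x\cap\mathbb{E}(C)$ to get $P=y\stackrel{\rightarrow}{C}z$, observes that $P$ is automatically $Y\bar x$-\emph{pseudo}-alternating (since $P(\bar x)\subseteq N_x[\bar x]\cap\mathbb{V}(P)=\bar x\cup Y$), and then splits on whether $P$ is properly pseudo-alternating (there is an edge $uv\in\mathbb{E}(P)\cap[\bar x,\bar x]$, and $C'=y\stackrel{\rightarrow}{P}uz\stackrel{\leftarrow}{P}vy$ works directly) or semi-alternating (apply the two lemmas). In every case one obtains $C'$ with $\mathbb{V}(C')=\mathbb{V}(C)$ and $\mathbb{E}(C')\subseteq\mathbb{E}(P)\cup E$, hence $|\mathbb{E}(C')\cap B_x|\le|\mathbb{E}(P)\cap B_x|=|\mathbb{E}(C)\cap B_x|-1$, and the argument closes by induction on that count. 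Your ``relocate $x'$ and iterate'' idea for the case of consecutive $\bar x$-vertices is essentially this proper-pseudo-alternating branch; the missing piece is to recognise that the same inductive scheme is what carries the semi-alternating branch as well.
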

\begin{proof} Since $G$ is a spanning subgraph of $G_x$, clearly $c(G) \leq c(G_x)$. Now to prove $c(G_x) \leq c(G)$ it is sufficient to prove that every cycle $C$ of $G_x$ can be transformed into a cycle $\mathbb{C}$ of $G$ such that $\mathbb{V}(\mathbb{C}) = \mathbb{V}(C)$. The proof is by induction on the number of egdes of $C$ which are in $B_x$. Indeed, if $C$ contains no edge of $B_x$ then $C$ is a cycle of $G$ and the result is immediate. Now suppose that $C$ contains $k+1$ egdes of $B_x$ and let $yz \in B_x \cap \mathbb{E}(C)$. By Definition \ref{DfLocalCompletion}, $y,z \in N(\bar x)$ and $yz \notin E$. Moreover, since $[\{y,z\}, \bar x] \subseteq E \subseteq E_x$ by Fact \ref{FaBasicN-equivalence}.\ref{FaBasicN-equivalence.1}, the maximality of $C$ implies $\mathbb{V}(C) \cap \bar x = \bar x$  by Lemma \ref{LmClassXonC}. 

Up to a rotation, we can suppose that $y$ is the starting and ending point of $C$, and so either $C = yz \stackrel{\rightarrow}{C}y$, or $C = y\stackrel{\rightarrow}{C}zy$. Without loss of generality, we can suppose the second case (otherwise the proof is done using $\stackrel{\leftarrow}{C}$). Let now $P = y \stackrel{\rightarrow}{C} z$. Clearly $\mathbb{V}(P)= \mathbb{V}(C)$ and $\mathbb{E}(P) = \mathbb{E}(C) \setminus \{yz\}$. Hence in particular, $| B_x \cap \mathbb{E}(P)| = k$ and $\bar x \subseteq \mathbb{V}(P)$. 

We show now that there exists a cycle $C'$ in $G_x$ such that $\mathbb{V}(P) = \mathbb{V}(C')$ and $\mathbb{E}(C') \subseteq \mathbb{E}(P) \cup E$. Let $Y = \mathbb{V}(P) \cap  N(\bar x)$. Clearly $Y$ and $\bar x$ are disjoint subsets of $\mathbb{V}(P)$ and $y,z \in Y$. Moreover, by Facts \ref{FaBasicN-equivalence}.\ref{FaBasicN-equivalence.3} and \ref{FaBasicLocalCompletion}.\ref{FaBasicLocalCompletion.1.5}, we have $N[\bar x] = N_x[x']$, for every $x' \in \bar x$, and so it is straightforward to check that $P(\bar x) \subseteq \bar x \cup Y$. Hence $P$ is a $Y\!\bar x$-pseudo-alternative path (cf. Definition \ref{DfAlternatingPath}). We have also $[Y, \bar x] \subseteq E$ by Fact \ref{FaBasicN-equivalence}.\ref{FaBasicN-equivalence.1}. Now, if $P$ is proper, that is, if $P(\bar x)\cap \bar x \neq \emptyset$, then there exists an edge $uv \in \mathbb{E}(P) \cap [\bar x,\bar x]$. Since $uz, vy \in [\bar x, Y] \subseteq E$, $C' = y \stackrel{\rightarrow}{P}uz \stackrel{\leftarrow}{P}vy$ is the cycle we are looking for. Now, if $P$ is $Y\!\bar x$-semi-alternating then there exists a cycle $C'$ such that $\mathbb{V}(C') = \mathbb{V}(P)$ and $\mathbb{E}(C') \subseteq \mathbb{E}(P) \cup E$ by Lemma \ref{LmN2CompletionCircumference} and \ref{LmN2CompletionCircumferenceProper}. 

Hence, in any case, there exists a cycle $C'$ such that $\mathbb{V}(P) = \mathbb{V}(C')$ and $\mathbb{E}(C') \subseteq \mathbb{E}(P) \cup E$. So, in particular, we have $B_x \cap \mathbb{E}(C') \subseteq B_x \cap \mathbb{E}(P)$ and the induction hypothesis applies to $C'$. Hence there is a cycle $\mathbb{C}$ of $G$ such that $\mathbb{V}(C') = \mathbb{V}(\mathbb{C})$. So, since $\mathbb{V}(C') = \mathbb{V}(P) = \mathbb{V}(C)$, $\mathbb{C}$ is the cycle of $G$ we are looking for. 
\end{proof}

\begin{figure}[!h]\label{N3EligibleBis}
\begin{center}
\psfrag{Titre1}{{\footnotesize{A non-Hamiltonian graph $G$}}}
\psfrag{Titre1Fin}{{\footnotesize where $x$ is N3-eligible}}
\psfrag{Titre 2}{{\footnotesize  The graph $G_x$ with Hamilton}}
\psfrag{Titre 2 Fin}{{\footnotesize cycle $xauwvcbyx$}}
\psfrag{x}{{\footnotesize$x$}}
\psfrag{a}{{\footnotesize$a$}}
\psfrag{b}{{\footnotesize$b$}}
\psfrag{c}{{\footnotesize$c$}}
\psfrag{1}{{\footnotesize$u$}}
\psfrag{2}{{\footnotesize$v$}}
\psfrag{3}{{\footnotesize$w$}}
\psfrag{y}{{\footnotesize$y$}}
%\psfrag{Titre3}{$cl_R(cl_N(G))$}
%\psfrag{Titre4}{$N(b)$}
%\psfrag{Titre5}{$N_{\nu}(b)$}
\includegraphics[width=9cm]{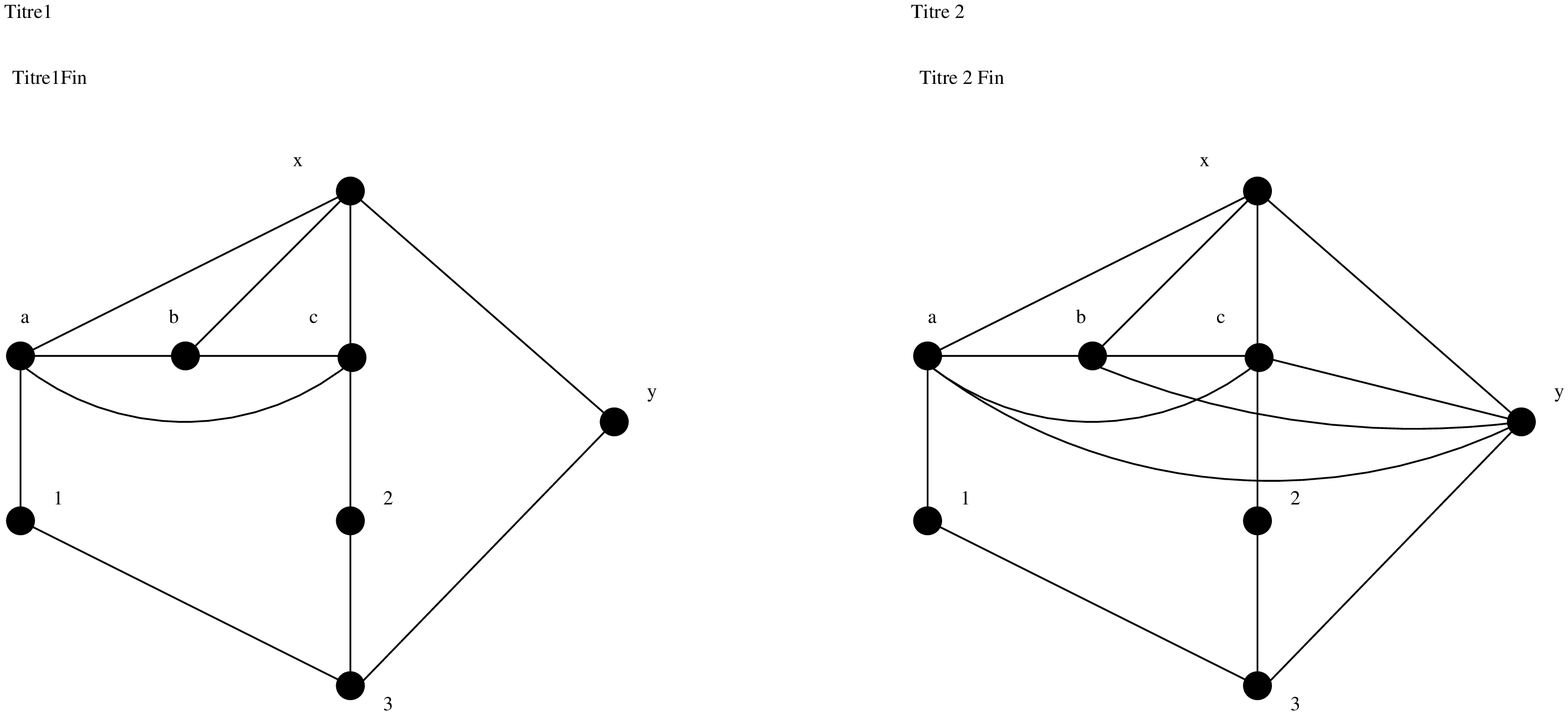} %covering1.eps}
   \caption{A non-Hamiltonian graph with a Hamiltonian N3-completion.} 
   \protect
\label{N3EligibleWrongFig}   
\end{center}
\end{figure}  

Notice that the notion of N2-eligibility can be generalized to every positive integer $k$, in the obvious way, by using a weight-function $\chi_k$ which counts the edges of $E(N(\bar x))$ up to $k$. Nevertheless, the N2-eligibility is optimal in the sense that Proposition \ref{PrN2CompletionCircumference} is not always true as soon as $x$ is N$k$-eligible for some $k \geq 3$. A counter-example is given in Figure \ref{N3EligibleWrongFig} for $k=3$, and so for every $k \geq 3$, since every N$3$-eligible vertex is also N$k$-eligible for every $k > 3$.

We remind the reader that a \emph{choice function} on $V$ is a function $\rho : \mathcal{P}(V) \mapsto V$ such that, for every non-empty $X \in \mathcal{P}(V)$, $\rho(X) \in X$.

\begin{theorem}\label{ThmN2Closure} For all graph $G$ and choice function $\rho$ on $V$ there exists a graph $cl_{\rho}(G)$ containing no N2-eligible vertex and such that $G$ is a spanning subgraph of $cl_{\rho}(G)$ and $c(cl_{\rho}(G)) =c(G)$.
\end{theorem}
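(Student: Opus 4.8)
The plan is to build $cl_\rho(G)$ by repeatedly performing local completion at an N2-eligible vertex, letting $\rho$ select which vertex to complete at each stage so that the construction is fully determined by $\rho$. Formally, I would define a finite sequence of graphs $G = G_0, G_1, \dots, G_m$ on the common vertex set $V$ as follows. Given $G_i = (V, E_i)$, let $A_i \subseteq V$ be the set of N2-eligible vertices of $G_i$. If $A_i = \emptyset$, stop and set $m = i$. Otherwise set $x_i = \rho(A_i)$ (which is legitimate since $A_i$ is a non-empty subset of $V$ and $\rho$ is a choice function on $V$), and let $G_{i+1} = (G_i)_{x_i}$ be the local completion of $G_i$ at $x_i$. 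Finally put $cl_\rho(G) = G_m$.

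The key point, and the only genuinely nontrivial step, is that this process terminates. I would argue that each completion strictly increases the number of edges: since $x_i \in A_i$ is N2-eligible it is non-simplicial in $G_i$, so its closed neighborhood in $G_i$ is not a clique, and by the remark following Definition \ref{DfLocalCompletion} any two non-adjacent vertices of this closed neighborhood lie in $B_{x_i}$; hence $B_{x_i} \neq \emptyset$ and $E_{i} \subsetneq E_{i+1}$. Because $E_i \subseteq \mathbb{P}(V)$ for every $i$ and $\mathbb{P}(V)$ is finite, the sequence of edge sets is strictly increasing and bounded above, so the loop must halt after finitely many steps, necessarily with $A_m = \emptyset$.

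It then remains to verify the three asserted properties, each of which follows by a short induction along the sequence. By construction $cl_\rho(G) = G_m$ contains no N2-eligible vertex, since the process stopped precisely when $A_m = \emptyset$. Each $G_i$ is a spanning subgraph of $G_{i+1}$ by Fact \ref{FaBasicLocalCompletion}, and all the $G_i$ share the vertex set $V$, so by transitivity $G = G_0$ is a spanning subgraph of $G_m = cl_\rho(G)$. Finally, since each $x_i$ is N2-eligible in $G_i$, Proposition \ref{PrN2CompletionCircumference} gives $c(G_{i+1}) = c(G_i)$ at every step, whence $c(cl_\rho(G)) = c(G_m) = c(G_{m-1}) = \cdots = c(G_0) = c(G)$. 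The whole theorem is thus essentially a packaging of Proposition \ref{PrN2CompletionCircumference} together with the termination observation; the main care needed is to confirm that non-simpliciality of the chosen vertex forces $B_{x_i}\neq\emptyset$, which is exactly what guarantees progress.
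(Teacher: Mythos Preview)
Your proof is correct and follows essentially the same approach as the paper: iterate local completion at the $\rho$-chosen N2-eligible vertex and invoke Proposition~\ref{PrN2CompletionCircumference} at each step. The only minor difference is the termination measure---you argue that the edge set strictly increases (using non-simpliciality to force $B_{x_i}\neq\emptyset$), whereas the paper argues that the set of non-simplicial vertices strictly decreases via Fact~\ref{FaBasicLocalCompletion}.\ref{FaBasicLocalCompletion.1.6}; both are valid and equally easy.
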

\begin{proof} The proof is by induction on the number of non-simplicial vertices of $G$. If $NS = \emptyset$ or if there is no N2-eligible vertex in $G$ then we define $cl_{\rho}(G) = G$. Otherwise, let $x = \rho (\nu)$, where $\nu$ is the set of N2-eligible vertices of $G$. Notice that $x \in N\!S$ and, since $\bar x \subseteq S_x \supseteq S$ by Fact \ref{FaBasicLocalCompletion}.\ref{FaBasicLocalCompletion.1.6}, it comes $x \in S_x \setminus S$, $S \subsetneq S_x$ and so $N\!S_x \subsetneq NS\!$. Hence, by induction hypothesis, there exists a graph $cl_{\rho}(G_x)$ which contains no N2-eligible vertex, such that $G_x$ is a spanning subgraph of $cl_{\rho}(G_x)$ and such that $c(G_x)=c(cl_{\rho}(G_x))$. Since $G$ is a spanning subgraph of $G_x$ and $c(G) = c(G_x)$ by Proposition \ref{PrN2CompletionCircumference}, the result comes by letting $cl_{\rho}(G)= cl_{\rho}(G_x)$.
\end{proof}

\begin{corollary} $cl_{\rho}(G)$ is Hamiltonian if and only if $G$ is, if and only if $cl_{\rho'}(G)$ is, for every choice function $\rho '$ on $V$.
\end{corollary}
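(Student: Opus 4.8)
The plan is to reduce the corollary to Theorem~\ref{ThmN2Closure} by expressing Hamiltonicity purely through the circumference. First I would record the elementary translation dictated by the paper's conventions: since a cycle $x_0 \dotsc x_k x_0$ has exactly the $k+1$ distinct vertices $x_0, \dotsc, x_k$ and length $k+1$, its length coincides with its number of vertices. A Hamilton cycle of a graph on vertex set $V$ is therefore precisely a cycle of length $|V|$, and because no cycle can be longer than $|V|$, a graph with vertex set $V$ is Hamiltonian if and only if its circumference equals $|V|$. This reformulation holds verbatim for $G$ and for every graph sharing the vertex set $V$.

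Next I would invoke the two conclusions of Theorem~\ref{ThmN2Closure}. Since $G$ is a spanning subgraph of $cl_{\rho}(G)$, the latter has the very same vertex set $V$, and in particular the same cardinal $|V|$; and the theorem also gives $c(cl_{\rho}(G)) = c(G)$. Combining these with the circumference criterion above, $cl_{\rho}(G)$ is Hamiltonian if and only if $c(cl_{\rho}(G)) = |V|$, if and only if $c(G) = |V|$, if and only if $G$ is Hamiltonian. This already establishes the first biconditional.

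Finally, the identical chain of equivalences applied to an arbitrary choice function $\rho'$ shows that $G$ is Hamiltonian if and only if $cl_{\rho'}(G)$ is, and transitivity of ``if and only if'' then yields the full statement, namely that $cl_{\rho}(G)$ is Hamiltonian $\iff G$ is Hamiltonian $\iff cl_{\rho'}(G)$ is Hamiltonian. I do not anticipate any real obstacle here: the entire substance of the corollary is already carried by the spanning-subgraph and circumference-preservation clauses of the theorem, and the only points requiring (minor) care are the bookkeeping that ``circumference equals number of vertices'' is the correct numerical rendering of Hamiltonicity under these definitions, and the observation that passing from $G$ to any $cl_{\rho}(G)$ leaves the vertex set, hence $|V|$, unchanged.
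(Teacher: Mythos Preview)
Your proposal is correct and is precisely the argument the paper has in mind: the corollary is stated without proof, as an immediate consequence of Theorem~\ref{ThmN2Closure}, and your reduction of Hamiltonicity to the condition $c(\cdot)=|V|$ together with the spanning-subgraph and circumference-preservation clauses of the theorem is exactly the intended derivation.
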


\begin{figure}[!h]\label{CaptureN2Eligible}
\begin{center}
%\psfrag{2}{$a$}
%  \psfrag{3}{$b$} 
%  \psfrag{4}{$c$}
%\psfrag{x}{$x$}
%\psfrag{y}{$y$}
%\psfrag{z}{$z$}
%  \psfrag{u}{$u$} 
 % \psfrag{v}{$v$}
%\psfrag{Titre1}{$G$}
%\psfrag{Titre2}{$G_{\nu} = cl_N(G)$}
%\psfrag{Titre3}{$cl_R(cl_N(G))$}
%\psfrag{Titre4}{$N(b)$}
%\psfrag{Titre5}{$N_{\nu}(b)$}
\includegraphics[width=13cm]{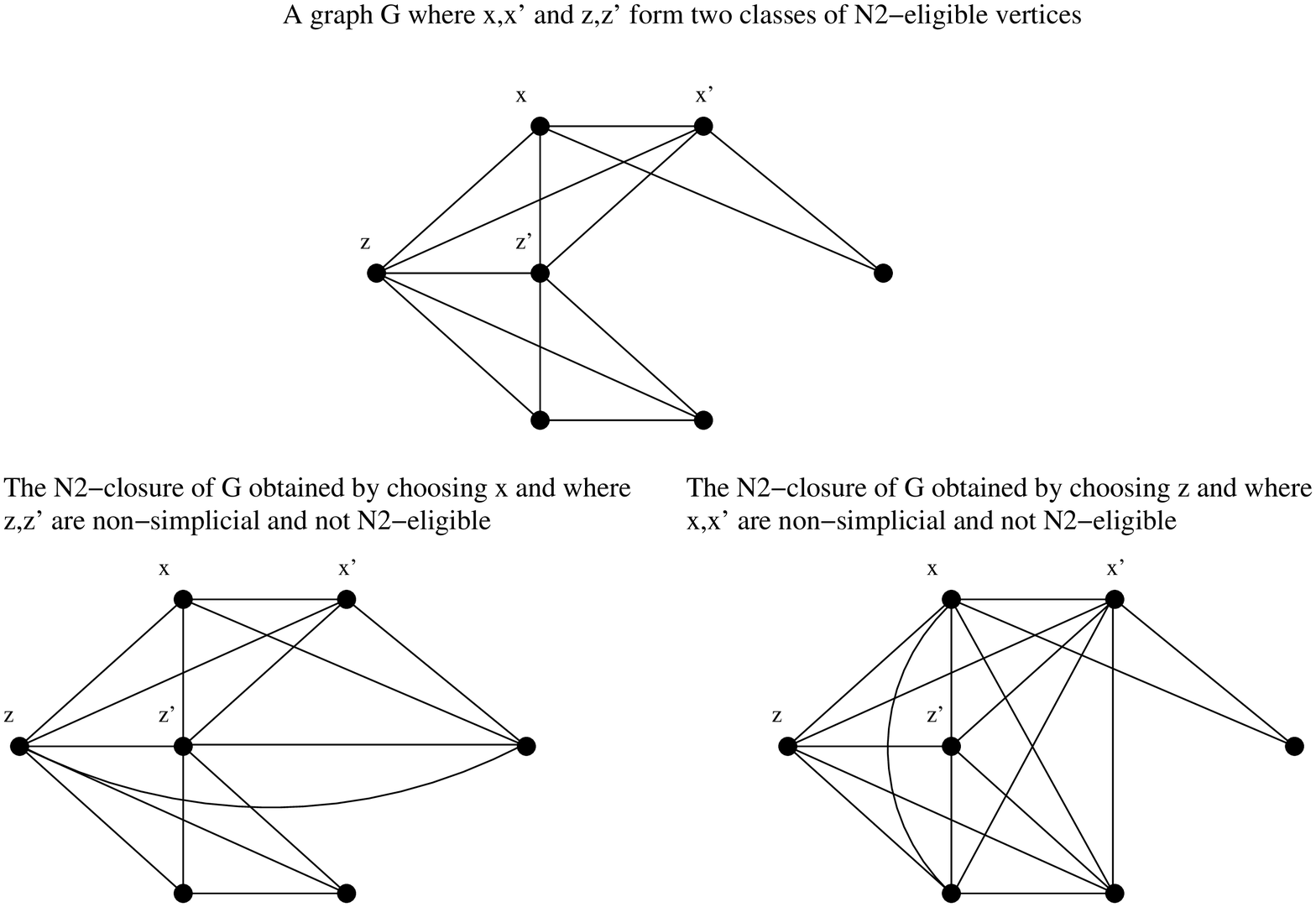} %covering1.eps}
   \caption{A graph with two distinct non-optimal N2-closures.} 
\protect
\label{CaptureN2EligibleFig}   
\end{center}
\end{figure}

%\begin{corollary} If $\rho '$ is another choice function on $V$ then $cl_{\rho}(G)$ is Hamiltonian if and only if $cl_{\rho'}(G)$ is.
%\end{corollary}

\section{Conclusion}

In this article, we introduced another closure concept preserving Hamiltonicity which is essentially a generalization of N-closure defined in \cite{BreVal3}. Nevertheless, due to its greater generality, the N2-closure obtained in Theorem \ref{ThmN2Closure} by recursively choosing a N2-eligible vertex $x$ may depend of the choice of $x$. Hence there are often more than one N2-closure for a given graph. As shown in Figure \ref{CaptureN2EligibleFig} below, this can be due to the fact that a non-simplicial vertex may be N2-eligible in $G$ but not in $G_x$, although it is still non-simplicial in $G_x$. Hence, contrary to the N-closure, the N2-closure is not optimal in the sense that every N2-eligible vertex of $G$ would be simplicial in $cl_{\rho}(G)$. Nevertheless, it seems that a strategy to build an optimal N2-closure is possible.


\begin{thebibliography}{10}\label{bibliography}

%bibitem{Aho1983}  Aho A.V., J.E. Hopcroft and J.D. Ullman, \emph{Data structures
%  and algorithms}, Addison-Wesley Series in Computer Science and Information
%  Processing (1983).

%\bibitem{BaueBroSch2006} Bauer D., D. Broersma, and E. Schmeichel, \emph{Toughness in graphs - A survey}, Graphs and Combinatorics, Vol. {\bf 22
%    (1)} (2006), pp. 1-35. 

\bibitem{BondyChvatal} Bondy J.A., V. Ch\'atal, \emph{A method in graph theory}, Discrete Mathematics Vol. {\bf 15} (1976), pp. 111-135.

\bibitem{BreVal1} Bretto A., Th. Vall\'ee, \emph{A clique-covering sufficient condition for Hamiltonicity of graphs}, Information Processing Letters Vol. {\bf 109} (2009), pp. 1156-1160.

\bibitem{BreVal2} Bretto A., Th. Vall\'ee, \emph{Hamiltonicity of simplicial-connected graphs: an algorithm based on clique decomposition}, Proceedings of ITNG'08, IEEE Computer Society Order Number P3099, ISBN 978-0-7695-3099-4 (2008), p. 904-909.

\bibitem{BreVal3} Bretto A., Th. Vall\'ee, \emph{A new closure concept preserving graph Hamiltonicity and based on neighborhood-equivalence}, Discrete Mathematics Vol. {\bf 311} (2011), pp. 336-341.

\bibitem{BroersmanRyjacek} Broersman H., Z. Ryj\'ac$\check{e}$k, \emph{Strengthening the closure concept in claw-free graphs}, Discrete Mathematics, Vol. {\bf 233} ,  Issue 1-3 (April 2001), pp. 55-63.

\bibitem{BroersmaRyjacekSchiermeyer} Broersma H., Z. Ryj\'ac$\check{e}$k and I. Schiermeyer, \emph{Closure Concepts: A Survey}, Journal Graphs and Combinatorics, Vol. {\bf 16}, Number 1 (2000). 

%\bibitem{DeLeon2000} DeLeon  M., \emph{A Study of Sufficient Conditions for
%    Hamiltonian Cycles}, Rose-Hulman Undergraduate Math. Journal, Archives 2000, Vol.{\bf 1-n1}, paper 4.

%\bibitem{CheChaLiTa2004} Y-Chuang Chen, Chang-Hsiung Tsai, Lih-Hsing Hsu and Jimmy J. M. Tan, \emph{On some super fault-tolerant Hamiltonian graphs}, %Applied Mathematics and Computation
%Vol. {\bf 148}, Issue 3, (2004), pp. 729-741.

%\bibitem{Dirac1952} G.A. Dirac, \emph{Some theorems of abstract graphs},
%  Proc. London Math. Soc., Vol {\bf 2} (1952), pp. 69-81.

%\bibitem{Dirac1961} G. A. Dirac, \emph{On rigid circuit graphs}, Abh. Math. Sem. Univ.
%Hamburg, {\bf 25(1�2)}, (1961), pp. 71�76.

%\bibitem{FauFlanRyj1997} Faudree R., E. Flandrin and Z. Ryj\`acek, \emph{Claw-free graphs - A survey}, Discrete Mathematics Vol. {\bf 164}
%  (1997), pp. 87-147.

%\bibitem{FavaronRyjacek} Favaron O., E. Flandrin , H. Li and Z. Ryjac\`ek, \emph{Clique covering and degree conditions for Hamiltonicity in claw-free %graphs}, Discrete Mathematics {\bf 236} (2001), pp. 65-80.  


\bibitem{GooHed1974} Goodman S. and S.T. Hedetniemi, \emph{Sufficient
    conditions for graph to be Hamiltonian}, Journal of Combinatorial Theory (B), Vol. {\bf 16} (1974), pp. 175-180.

\bibitem{Gould1991} Gould R.J., \emph{Updating the Hamiltonian problem - A
    survey}, Journal of Graph Theory Vol. {\bf 15} (1991), pp. 121-157.

\bibitem{Gould2003} Gould R.J., \emph{Advances on the Hamiltonian Problem - A
    survey}, Graphs and Combinatorics Vol. {\bf 19} (2003), pp. 7-52.

%\bibitem{Lek1962}  Lekkerkerker C.B. and J.C. Boland, "Representation of
 %   finite graphs by a set of intervals on the real line", Fund. Math.,
  %Vol. {\bf 51} (1962), pp. 45-64.

%\bibitem{Li2006}  Li R., \emph{A new sufficient condition for Hamiltonicity of
%    graphs}, Information Processing Letters, Vol. {\bf 98}, Issue 4 (2006), pp. 159-161.

\bibitem{Ryjacek1} Ryj\'ac$\check{e}$k Z., \emph{On a closure concept in claw-free graphs}, Journal of Combinatorial Theory, Series B {\bf 70} (1997), pp. 217-224.

\bibitem{Vallee1} Vall\'ee Th., \emph{Normal Eulerian clique-covering and Hamiltonicity}, Information Processing Letters, Vol. {\bf 110} Issue 16 (2010), pp. 697-701. 

\end{thebibliography}
\end{document}